\newtheorem{thm}{Theorem}[section]
\newtheorem{lem}[thm]{Lemma}
\newtheorem{cor}[thm]{Corollary}
\theoremstyle{definition}
\newtheorem{dfn}[thm]{Definition}
\newtheorem{exm}[thm]{Example}
\theoremstyle{remark}
\newtheorem{rem}[thm]{Remark}
\newcommand{\exmsymbol}{\hfill$\circ$}
\newcommand{\cset}{\mathds{C}}
\newcommand{\nset}{\mathds{N}}
\newcommand{\rset}{\mathds{R}}
\newcommand{\zset}{\mathds{Z}}
\newcommand{\diff}{\mathrm{d}}
\newcommand{\supp}{\mathrm{supp}\,}
\newcommand{\bimap}{\hookrightarrow\mathrel{\mspace{-15mu}}\rightarrow}
\newcommand{\cB}{\mathcal{B}}
\newcommand{\cat}{\mathcal{C}}
\newcommand{\cH}{\mathcal{H}}
\newcommand{\cS}{\mathcal{S}}
\newcommand{\cZ}{\mathcal{Z}}
\newcommand{\fC}{\mathfrak{C}}
\newcommand{\fd}{\mathfrak{d}}
\newcommand{\fI}{\mathfrak{I}}
\newcommand{\fp}{\mathfrak{p}}
\newcommand{\ft}{\mathfrak{t}}
\newcommand{\fT}{\mathfrak{T}}
\definecolor{DarkBlue}{rgb}{0,0.2,0.6}
\definecolor{PinkPurple}{rgb}{0.8,0.3,0.3}
\definecolor{darkgreen}{rgb}{.1,.5,0}
\definecolor{brown}{rgb}{.4,.2,.1}
\author{Ra\'ul E. Curto}
\address{University of Iowa, Department of Mathematics, Iowa City, Iowa 52246, U.S.A.}
\email{raul-curto@uiowa.edu}
\author{Philipp J.\ di~Dio}
\address{Universit\"at Leipzig, Institut f\"ur Mathematik und Informatik, Augustplatz 10, D-04109 Leipzig, Germany}
\email{didio@uni-leipzig.de}
\subjclass[2010]{Primary 47A57, 44A60; Secondary 30E05, 65D32, 35K05, 35Q49.}
\keywords{Gaussian Mixtures, Heat Equation, Moment Problem, Truncated Moment Problem}
\begin{document}%%%
%%%%%%%%%%%%%%%%%%%

\begin{abstract}
We present a new connection between the classical theory of full and truncated moment problems and the theory of partial differential equations, as follows. \ For the classical heat equation $\partial_t u = \nu \Delta u$, with initial data $u_0 \in\cS(\rset^n)$, we first compute the moments $s_{\alpha}(t)$ of the unique solution $u \in \cS(\rset^n)$. \ These moments are polynomials in the time variable, of degree comparable to $\alpha$, and with coefficients satisfying a recursive relation. \ This allows us to define the polynomials for any sequence, and prove that they preserve some of the features of the heat kernel. \ In the case of moment sequences, the polynomials trace a curve (which we call the heat curve) which remains in the moment cone for positive time, but may wander outside the moment cone for negative time. \ This provides a description of the boundary points of the moment cone which are also moment sequences. \ We also study how the determinacy of a moment sequence behaves along the heat curve. \ Next, we consider the transport equation $\partial_t u = ax \cdot \nabla u$, and conduct a similar analysis. \ Along the way we incorporate several illustrating examples.
\end{abstract}

\maketitle

\tableofcontents

\section{Introduction}%%%
%%%%%%%%%%%%%%%%%%%%%%%%%

Let $\mu$ be a positive Borel measure on $\rset^n$, such that $\mathbb{R}[x_1,\ldots,x_n] \subseteq L^1(\mu)$. \ The moment $s_\alpha$ with $\alpha\in\nset_0^n$ of $\mu$ is
\begin{equation}\label{eq:moment}
s_\alpha := \int_{\rset^n} x^\alpha~\diff\mu(x).
\end{equation}
The moment problem entails the question: \ Given a sequence $(s_\alpha)_{\alpha\in\nset_0^n}$ of real numbers, does there exist a measure $\mu$ such that (\ref{eq:moment}) holds?

As simple as the formulation of this question is, finding an answer (and the theory behind it) is an enormous undertaking, only exceeded by the applications in optimization, integral approximation, non-negative polynomials, statistics, shape reconstruction, and other fields, see e.g.\ the monographs and reviews \cite{shohat43,ahiezer62,akhiezClassical,kreinMarkovMomentProblem,lauren09,
lasserreSemiAlgOpt,schmudMomentBook} and the research articles 
\cite{havila35,havila36,shohat43,richte57,krein70,kemper71,stroud71,
curto3,curto05,curto13,vasile14,gravin14,stoyan16,fialkow17,didio17owr,
infusino17,didio17Cara,gravin18,rienerOptima,didio18gaussian,marx20,didio21HilbertFunction}, to name only a few.

Instead of focusing our study on one fixed sequence $(s_\alpha)_\alpha$, in this paper we are interested in a $1$--parameter family of sequences; that is, a family depending on a parameter $t$ we call time: $(s_\alpha(t))_\alpha$. \ In order to gain insightful information, we ask that the representing measure $\mu$ of the moment sequence fulfills an evolution equation. \ We study here two simple cases: the heat equation
$\partial_t u = \nu\Delta u$
and the transport equation
$\partial_t u = x\nabla u$.
The representing measure will be necessarily of the form $\diff \mu_t(x) = u(x,t)~\diff x$. First, we calculate the time-dependent moments for these cases. \ We then investigate, starting with a moment sequence at $t=0$, how determinacy and indeterminacy of the moment sequence changes, for which times the time-dependent sequences remain moment sequences, which structure of the moment cone (i.e., the set of all moment sequences) is revealed by the time-dependent moments, and what can be said about the underlying representing measures with changing time $t$, without necessarily solving the evolution equation.

In Section \ref{sec2} we discuss in detail the case of the heat equation $\partial_t u = \nu \Delta u$, and we present some illustrative examples. \ Section \ref{sec3} is reserved for a study of the transport equation $\partial_t u = ax \cdot \nabla u$, and we give some revealing examples. \ Finally, in Section \ref{sec4} we consider an equation of the form $\partial_t u(x,t) = \nu\Delta u(x,t) + ax\cdot\nabla u(x,t)$, that is, a suitable combination of the heat and transport equations.

%%%%%%%%%%%%%%%%%%%%%%%%%%%%%%%%%%%%%%%%%%%%%%%%%%%%%%%%%%%%%%%%%%%%%%%%%%%%%%%%%%%%%%%
\section{Time-dependent moments from the heat equation $\partial_t u = \nu \Delta u$} \label{sec2}%%%
%%%%%%%%%%%%%%%%%%%%%%%%%%%%%%%%%%%%%%%%%%%%%%%%%%%%%%%%%%%%%%%%%%%%%%%%%%%%%%%%%%%%%%%%

Let $\nu>0$ and $n\in\nset$. \ Consider the \emph{heat equation}
\begin{equation}\label{eq:heat}\begin{split}
\partial_t u(x,t) &= \nu\Delta u(x,t)\\
u(x,0) &= u_0(x),
\end{split}\end{equation}
with $x \equiv (x_1,\ldots,x_n)^T \in\rset^n$ and $u_0\in\cS(\rset^n)$ a Schwartz function. \ It is well known that (\ref{eq:heat}) has the unique solution
\begin{equation}\label{eq:heatSol}
u(\,\cdot\,,t) = \Theta_{\nu,t}* u_0
\end{equation}
with the heat kernel
\begin{equation}\label{eq:heatKernel}
\Theta_{\nu t}(x) := \frac{1}{(4\pi\nu t)^{n/2}}\cdot e^{-\frac{x^2}{4\nu t}}
\end{equation}
for all $t>0$ and with $x^2 := x_1^2 + \dots + x_n^2$; here $*$ denotes convolution. \ $\Theta_{0}$ is the Dirac $\delta$-distribution since $\Theta_{0}*u_0 = u_0$ holds. \ Also, recall that the heat equation admits non-physical solutions, and when $u_0=0$, there are infinitely many solutions which grow very rapidly with $|x|\to\infty$, see \cite[Ch.\ 7]{john95}.  \ (For more on the heat equation see, e.g.,\ \cite[Ch.\ 2.3]{evans10}.)

If $u_0\in\cS(\rset^n)$, then $u(\,\cdot\,,t)\in\cS(\rset^n)$ for all $t\geq 0$. \ Hence, all moments of $u(x,t)$ exist and are time-dependent:
\[s_\alpha(t) := \int_{\rset^n} x^\alpha\cdot u(x,t)~\diff x \quad\text{with}\quad \alpha\in\nset_0^n.\]
If $u_0\geq 0$, then $u(\,\cdot\,,t)\geq 0$ for all $t\geq 0$ and therefore $s(t) := (s_\alpha(t))_{\alpha\in\nset_0^n}$ is a moment sequence with representing measure $\diff\mu = u(x,t)~\diff x$ for all $t\geq 0$. \ The one-parameter family $s(t)$ of moment sequences describes a curve in the moment cone. \ This curve is given by a polynomial (with and without the restriction $u_0\geq 0$). \ As a matter of fact, all $s_\alpha$ are polynomials in $t$, as we now prove. \ In the next lemma, $\lfloor x\rfloor$ denotes the largest $k\in\zset$ with $k\leq x$.

\begin{lem}\label{lem:moments}
Let $n\in\nset$ and $\nu>0$, and let $u$ be a solution of the heat equation (\ref{eq:heat}) with $u_0\in\cS(\rset^n)$. \ Then
\begin{enumerate}[(i)]
\item $s_\alpha \in \rset[t]$ with
\[\deg s_\alpha \leq \left\lfloor \frac{\alpha_1}{2} \right\rfloor + \dots + \left\lfloor \frac{\alpha_n}{2} \right\rfloor\]
for all $\alpha\in\nset_0^n$. \ The coefficients of $s_\alpha(t)$ depend only on the moments $s_\beta(0)$ of $u_0$ with $\beta \leq \alpha$, i.e., $\beta_j\leq \alpha_j$ for all $j=1,\dots,n$.

\item For $n=1$ and all $k\in\nset_0$ we have
\begin{align*}
s_{2k}(t) &= \sum_{j=0}^k \frac{(2k)!}{(2k-2j)!\cdot j!}\cdot s_{2k-2j}(0)\cdot (\nu t)^j
\intertext{and}
s_{2k+1}(t) &= \sum_{j=0}^k \frac{(2k+1)!}{(2k+1-2j)!\cdot j!}\cdot s_{2k+1-2j}(0)\cdot (\nu t)^j
\end{align*}
\end{enumerate}
\end{lem}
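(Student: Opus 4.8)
The plan is to avoid computing the solution explicitly and instead to derive a first-order linear ODE system for the moments directly from the PDE. First I would differentiate $s_\alpha(t) = \int_{\rset^n} x^\alpha u(x,t)\,\diff x$ under the integral sign and substitute the heat equation, obtaining $\frac{\diff}{\diff t}s_\alpha(t) = \nu \int_{\rset^n} x^\alpha \Delta u(x,t)\,\diff x$. Since $u(\,\cdot\,,t)\in\cS(\rset^n)$ for every $t\geq 0$, both the interchange of $\partial_t$ with the integral and two integrations by parts are justified with vanishing boundary terms, so I can shift the Laplacian onto the monomial:
\[
\frac{\diff}{\diff t}s_\alpha(t) = \nu\int_{\rset^n}\bigl(\Delta x^\alpha\bigr)\,u(x,t)\,\diff x = \nu\sum_{j=1}^n \alpha_j(\alpha_j-1)\,s_{\alpha-2e_j}(t),
\]
where $e_j$ denotes the $j$-th standard basis vector and the $j$-th term is read as $0$ when $\alpha_j<2$. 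This recursion is the engine for the whole lemma: the time derivative of $s_\alpha$ couples only to strictly lower moments.

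For part (i) I would argue by induction on $|\alpha|=\alpha_1+\dots+\alpha_n$. When $|\alpha|\leq 1$ every coefficient $\alpha_j(\alpha_j-1)$ vanishes, so $s_\alpha$ is constant, of degree $0=\sum_j\lfloor\alpha_j/2\rfloor$, and depends only on $s_\alpha(0)$. For the inductive step, the right-hand side of the recursion is, by the induction hypothesis, a polynomial in $t$, and each surviving summand $s_{\alpha-2e_j}$ has degree at most $\sum_i\lfloor\alpha_i/2\rfloor-1$, since lowering $\alpha_j$ by $2$ decreases its floor by exactly one. Integrating once in $t$ with initial value $s_\alpha(0)$ then produces a polynomial of degree at most $\sum_i\lfloor\alpha_i/2\rfloor$, where the only constant of integration is $s_\alpha(0)$ and the integral terms involve, again by induction, solely the values $s_\beta(0)$ with $\beta\leq\alpha-2e_j\leq\alpha$. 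This yields both the degree bound and the claimed coefficient dependence.

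For part (ii) the cleanest route is not to solve the $n=1$ recursion $s_m'(t)=\nu\,m(m-1)\,s_{m-2}(t)$ by hand, but to exploit the convolution representation $u(\,\cdot\,,t)=\Theta_{\nu t}*u_0$ from (\ref{eq:heatSol}). A change of variables in $\int_\rset x^m(\Theta_{\nu t}*u_0)(x)\,\diff x$ produces the binomial-convolution identity $s_m(t)=\sum_{l=0}^m\binom{m}{l}M_l(t)\,s_{m-l}(0)$, where $M_l(t)=\int_\rset z^l\,\Theta_{\nu t}(z)\,\diff z$ is the $l$-th moment of the heat kernel. Since $\Theta_{\nu t}$ is the centered Gaussian of variance $2\nu t$, its odd moments vanish and $M_{2j}(t)=(2j-1)!!\,(2\nu t)^j=\frac{(2j)!}{j!}(\nu t)^j$. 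Substituting this and simplifying $\binom{m}{2j}M_{2j}(t)$ via $\binom{2k}{2j}\frac{(2j)!}{j!}=\frac{(2k)!}{(2k-2j)!\,j!}$ (and likewise for $m=2k+1$) reproduces the two stated formulas. The main obstacle here is bookkeeping rather than conceptual: I must state the Schwartz-class justifications cleanly and carry out the factorial simplification without slips. As an independent check one can verify directly that these polynomials satisfy the recursion of part (i) and agree with $s_m(0)$ at $t=0$, so uniqueness for the linear ODE confirms them regardless of the convolution computation.
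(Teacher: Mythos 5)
Your proposal is correct. For part (i) you follow essentially the same path as the paper: differentiate under the integral, integrate by parts twice using the Schwartz-class decay, and arrive at the recursion $\partial_t s_\alpha(t)=\nu\sum_j \alpha_j(\alpha_j-1)s_{\alpha-2e_j}(t)$; the paper then simply remarks that this is a recursive system of ODEs, whereas you spell out the induction on $|\alpha|$ that extracts the degree bound (using $\lfloor(\alpha_j-2)/2\rfloor=\lfloor\alpha_j/2\rfloor-1$) and the dependence only on $s_\beta(0)$ with $\beta\leq\alpha$ --- a welcome amount of extra detail. For part (ii) you genuinely diverge: the paper solves the one-dimensional recursion $\partial_t s_m=\nu\,m(m-1)s_{m-2}$ directly by repeated integration, while you instead expand $s_m(t)=\int x^m(\Theta_{\nu t}*u_0)(x)\,\diff x$ via the substitution $x=y+z$ into the binomial-convolution identity $s_m(t)=\sum_l\binom{m}{l}M_l(t)s_{m-l}(0)$ and plug in the Gaussian moments $M_{2j}(t)=\frac{(2j)!}{j!}(\nu t)^j$, $M_{2j+1}=0$; the factorial simplification $\binom{2k}{2j}\frac{(2j)!}{j!}=\frac{(2k)!}{(2k-2j)!\,j!}$ checks out, as does the odd case. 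Your route buys a closed form in one stroke and makes the Gaussian structure behind the coefficients transparent, at the cost of invoking the explicit solution formula (valid for $t>0$, with $t=0$ recovered by continuity or by the ODE uniqueness check you mention); the paper's route stays entirely inside the ODE system and so never needs the convolution representation. Both are complete proofs.
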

\begin{proof}
(i): \ Without loss of generality, assume that $\nu=1$. \ Since $u$ solves (\ref{eq:heat}) we have
\begin{align*}
\partial_t s_\alpha(t) \!\!&=\!\! \partial_t \int_{\rset^n} \!\!x^\alpha\cdot u(x,t)~\diff x \!\!
= \!\!\int_{\rset^n} \!\!x^\alpha\cdot \partial_t u(x,t)~\diff x \!\!
= \!\! \int_{\rset^n} \!\!x^\alpha\cdot \nu\Delta u(x,t)~\diff x,
\intertext{and since $u(\,\cdot\,,t)\in\cS(\rset^n)$ for all $t\geq 0$, partial integration (twice) gives}
&= \nu \int_{\rset^n} (\Delta x^\alpha)\cdot u(x,t)~\diff x.
\end{align*}
With $\alpha=(\alpha_1,\dots,\alpha_n)\in\nset_0^n$ this implies
\[\Delta x^\alpha = (\partial_1^2 + \dots + \partial_n^2) x^\alpha = \sum_{j=1}^n \alpha_j\cdot (\alpha_j-1)\cdot x^{\alpha - 2e_j}\]
(where $e_j$ denotes the $n$--tuple with $j$--th coordinate equal to $1$ and zeros elsewhere). \ This gives
\begin{equation}\label{eq:momInduction}
\partial_t s_\alpha(t) = \sum_{j=1}^n \alpha_j\cdot (\alpha_j-1)\cdot s_{\alpha-2e_j}(t)
\end{equation}
with initial values $s_\alpha(0)$ and with $s_{\alpha-2e_j}(t) = 0$ for all $\alpha$ with $\alpha_j \leq 1$ for some $j=1,\dots, n$. \ Now observe that (\ref{eq:momInduction}) is a recursive system of ODEs, which proves the statement.

(ii): From (\ref{eq:momInduction}) we get $\partial_t s_0(t) = 0$, i.e., $s_0(t) = s_0(0)$, and
\begin{align*}
\partial_t s_{2j}(t) &= 2j\cdot (2j-1)\cdot s_{2j-2}(t) \tag{$*$}
\intertext{as well as $\partial_t s_1(t) = 0$, i.e.\ $s_1(t) = s_1(0)$, and}
\partial_t s_{2j+1}(t) &= (2j+1)\cdot 2j\cdot s_{2j-1}(t) \tag{$**$}
\end{align*}
($*$) and ($**$) can easily be solved by recursion, thus establishing (ii).
\end{proof}

\begin{rem}
\Cref{lem:moments} shows that, at our disposal, we have an explicit polynomial, and also unique access to the moments at all times $t>0$, without the need to calculate the convolution $u(\,\cdot\,,t) = \Theta_{\nu t} * u_0$. \ In fact, only the moments $s_\alpha(0)$ of the initial data $u_0$ must be known. \ We also want to emphasize that the polynomial of the moment $s_\alpha$ only depends on the ``lower'' moments $s_\beta(0)$, i.e., $\beta_j\leq \alpha_j$ for all $j=1,\dots,n$. \ This will enable us to use these results in the case of the truncated moment problem.\exmsymbol
\end{rem}

Since the polynomials $s_\alpha(t)$ are unique and depend only on $s_\beta(0)$ with $\beta\leq \alpha$ we define them for any initial sequence $s=(s_\alpha(0))_{\alpha\in\nset_0^n}$ (irrespective of whether the sequence is a moment sequence).

\begin{dfn}\label{dfn:fp}
Let $n\in\nset$ and $d\in\nset\cup\{\infty\}$. \ For a sequence $s=(s_\alpha(0))_{\alpha\in\nset_0^n:|\alpha|\leq d}$ we define
\[\fp_s := (\fp_{s,\alpha})_{\alpha\in\nset_0^n:|\alpha|\leq d}\subset\rset[t]\]
where $\fp_{s,\alpha}$ are the polynomials $s_\alpha(t)$ as in \Cref{lem:moments}.
\end{dfn}

\begin{cor}\label{cor:pProperties}
$\fp$ has the following properties:
\begin{enumerate}[(i)]
\item $\fp_{s}(0) = s$,

\item $\fp_{\fp_{s}(t_1)}(t_2) = \fp_s(t_1 + t_2)$ for all $t_1,t_2\in\rset$, and

\item $\fp_{a\cdot s + b\cdot s'} = a\cdot \fp_s + b\cdot \fp_{s'}$ for all $a,b\in\rset$ and $s,s'$ sequences.
\end{enumerate}
\end{cor}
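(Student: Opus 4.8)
The plan is to deduce all three assertions from the single structural fact established in the proof of \Cref{lem:moments}: for every initial sequence $s$, the family $\fp_s=(\fp_{s,\alpha})_\alpha$ is the \emph{unique} solution of the linear, autonomous, recursive system of ODEs (\ref{eq:momInduction}) with initial datum $s$. Writing $L$ for the time-independent linear operator on sequences given by $(L\,r)_\alpha := \sum_{j=1}^n \alpha_j(\alpha_j-1)\,r_{\alpha-2e_j}$ (with the convention that terms having some $\alpha_j\leq 1$ vanish, and absorbing the fixed constant $\nu$ into $L$, which affects nothing below), the system reads $\partial_t \fp_s = L\,\fp_s$ with $\fp_s(0)=s$. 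First I would record that its solution is unique: since $L$ strictly lowers the degree $|\alpha|$, one solves for the $\fp_{s,\alpha}$ by induction on $|\alpha|$, each step being a scalar antiderivative pinned down by the prescribed initial value. Assertion (i) is then immediate, as $\fp_s(0)=s$ is exactly the imposed initial condition.

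For (iii) I would exploit that $L$ and the initial-value assignment are both linear. Setting $S(t):=a\,\fp_s(t)+b\,\fp_{s'}(t)$, one has $S(0)=a\,s+b\,s'$ and $\partial_t S = a\,L\fp_s + b\,L\fp_{s'} = L\,S$, so $S$ solves (\ref{eq:momInduction}) with initial data $a\,s+b\,s'$. By the uniqueness just noted, $S=\fp_{a\cdot s+b\cdot s'}$, which is precisely the claim.

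For (ii) the crucial point is that the system is \emph{autonomous}, so any time-translate of a solution is again a solution. Fix $t_1$ and regard both sides as functions of $t_2$. The right-hand side $\psi(t_2):=\fp_s(t_1+t_2)$ satisfies $\psi(0)=\fp_s(t_1)$, and by the chain rule together with $\partial_t\fp_s = L\fp_s$ it also satisfies $\partial_{t_2}\psi=L\,\psi$ (here constancy of $L$ in $t$ is what makes the translated derivative reproduce the same operator). The left-hand side $\phi(t_2):=\fp_{\fp_s(t_1)}(t_2)$ is by \Cref{dfn:fp} the solution of (\ref{eq:momInduction}) with initial datum $\fp_s(t_1)$, hence $\phi(0)=\fp_s(t_1)$ and $\partial_{t_2}\phi=L\,\phi$. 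Thus $\phi$ and $\psi$ solve the same initial value problem; uniqueness forces $\phi=\psi$ for all $t_2$, and as $t_1$ was arbitrary this is (ii).

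The only genuine subtlety, and the step I would treat most carefully, is the uniqueness assertion, since the system is infinite; but the triangular grading by $|\alpha|$ reduces it to finitely many scalar antiderivative problems for each fixed $\alpha$, so no real difficulty arises. Everything else is a mechanical consequence of the linearity and autonomy of (\ref{eq:momInduction}). Equivalently, one could phrase the entire argument as $\fp_s(t)=e^{tL}s$ — with (i)--(iii) amounting to the value at $0$, the flow identity $e^{t_1L}e^{t_2L}=e^{(t_1+t_2)L}$, and linearity of $s\mapsto e^{tL}s$ — where $e^{tL}$ is well defined because $L$ is locally nilpotent on the grading by $|\alpha|$.
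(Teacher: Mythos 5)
Your proof is correct, but it takes a genuinely different route from the paper's. The paper simply declares (i) and (iii) ``clear'' and derives (ii) from the semigroup property of the heat kernel, $\Theta_{t_1}*(\Theta_{t_2}*u_0)=\Theta_{t_1+t_2}*u_0$ --- an analytic fact about the PDE. You instead work entirely with the defining recursion (\ref{eq:momInduction}), viewing $\fp_s$ as the unique solution of the triangular, autonomous, linear system $\partial_t\fp_s=L\fp_s$ with $\fp_s(0)=s$, and then read off (i) from the initial condition, (iii) from linearity of $L$ together with uniqueness, and (ii) from autonomy together with uniqueness (equivalently, $\fp_s(t)=e^{tL}s$ with $L$ locally nilpotent along the grading by $|\alpha|$). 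This buys something concrete: the corollary asserts (ii) for \emph{all} $t_1,t_2\in\rset$ and for arbitrary sequences $s$ per \Cref{dfn:fp}, whereas the heat-kernel identity directly covers only moment sequences of Schwartz initial data and nonnegative times; bridging that gap requires an additional (unstated) argument, e.g.\ that both sides of (ii) are polynomials in $t_1,t_2$ with coefficients linear in the $s_\beta(0)$. Your argument handles the general case head-on and is self-contained, at the cost of being longer than the paper's one-line appeal to the semigroup law; your explicit treatment of uniqueness for the infinite system, reduced to finitely many scalar antiderivative problems by the grading, is exactly the point that needed care and is handled correctly.
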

\begin{proof}
(i) and (iii) are clear. \ (ii) follows from the semi-group property of the heat kernel: $\Theta_{t_1} * (\Theta_{t_2}*u_0) = \Theta_{t_1 + t_2}* u_0$.
\end{proof}

\begin{exm}
For $n=1$, $k\in\nset_0$, and $\fp_{s,k}$ the polynomials in \Cref{lem:moments}(ii) we have
\begin{align}\label{eq:heatOneDimMoments}
\fp_{s,0}(t) = s_0(t) &= s_0(0)\notag\\
\fp_{s,1}(t) = s_1(t) &= s_1(0)\notag\\
\fp_{s,2}(t) = s_2(t) &= s_2(0) + 2s_0(0)\cdot t\notag\\
\fp_{s,3}(t) = s_3(t) &= s_3(0) + 6s_1(0)\cdot t\\
\fp_{s,4}(t) = s_4(t) &= s_4(0) + 12s_2(0)\cdot t + 12s_0(0)\cdot t^2\notag\\
\fp_{s,5}(t) = s_5(t) &= s_5(0) + 20s_3(0)\cdot t + 60s_1(0)\cdot t^2\notag\\
&\ \, \vdots \tag*{$\circ$}
\end{align}
\end{exm}

%Let us give some $\fp_\alpha$'s in the multivariable setting ($n\geq 2$), e.g.\ all $\fp_\alpha$'s with $|\alpha|\leq 2$.

\begin{exm}\label{exm:fpalphas}
Let $n\in\nset$. \ By definition we have
\[s_\alpha(t) = \int_{\rset^n} x^\alpha\cdot u(x,t)~\diff x.\]
and hence as in \Cref{lem:moments} we get
\[\partial_t s_\alpha(t) = \int_{\rset^n} x^\alpha\cdot \partial_t u(x,t)~\diff x = \int_{\rset^n} x^\alpha\cdot \Delta u(x,t)~\diff x
= \int_{\rset^n} (\Delta x^{\alpha})\cdot u(x,t)~\diff x.\]
Now, for $\alpha\in\nset_0^n$ with $|\alpha|\leq 2$ and letting $e_i$ denote the $n$--tuple with $1$ on the $i$--th entry and $0$'s elsewhere, we have
\[\Delta x^\alpha = (\partial_1^2 + \dots + \partial_n^2)x^\alpha = \begin{cases} 2 & \text{for}\ \alpha= 2e_1,\ldots, 2e_n \ \text{and}\\ 0 & \text{otherwise}.\end{cases}\]
Hence
\[\partial_t s_0(t) = 0 \quad\Rightarrow\quad s_0(t)=s_0(0)\]
and
\[\partial_t s_\alpha(t) = \begin{cases} 2s_0(t) = 2s_0(0) & \text{for}\ \alpha=2e_1,\ldots, 2e_n \ \text{and}\\ 0 & \text{for}\ \alpha\in\nset_0^n,\ |\alpha|\leq 2,\ \alpha\neq 2e_1,\ldots, 2e_n .\end{cases} \tag{$*$}\]
In summary, solving the ordinary differential equations ($*$) we get
\[\fp_{s,\alpha}(t) = \!s_\alpha(t) = \!\begin{cases} \!s_\alpha(0) + 2s_0(0)\cdot t & \!\!\!\text{for }\ \!\!\alpha = 2e_1,\ldots, 2e_n \ \text{and}\\ \!s_\alpha(0) & \!\!\!\text{for }\ \!\!\alpha\in\nset_0^n,\ \!|\alpha|\leq 2,\ \!\alpha\!\neq \!2e_1,\ldots, 2e_n.\end{cases}\]
\exmsymbol
\end{exm}

With the Riesz functional $L_s$ defined by $L_s(x^\alpha) = s_\alpha$ and linearly extended to all $\rset[x_1,\dots,x_n]$ we see that $\fp_s$ describes a curve in the moment cone which is given by a polynomial in each component; that is,
 $L_{\fp_s(t)}(x^\alpha) = \fp_{s,\alpha}(t) = s_\alpha(t)\in\rset[t]$.

The heat kernel $\Theta_{\nu t}$ is itself a solution of the heat equation (\ref{eq:heat}). \ In fact, it is the fundamental solution. \ But we already mentioned that $\Theta_{0}$ is the Dirac $\delta$-distribution, i.e., the $\delta$-measure (point evaluation) at $x_0=0$. \ For $t>0$ we have that $\Theta_{\nu t}$ is a Gaussian (normal) distribution. \ For notational simplicity we set $\Theta_{0}=\delta_0$. \ Since the heat equation is invariant with respect to translation, we also know that $\Theta_{\nu t}(x-x_0)$ for any $x_0\in\rset^n$ and $t\geq 0$ are (fundamental) solutions. \ For $\delta_{x_0}$, all moments exist since it is only point evaluation at $x_0$. \ For the Gaussian $\Theta_{\nu t}(x-x_0)\in\cS(\rset^n)$, it is also true that all moments exist. \ A combination of \Cref{lem:moments} and \Cref{dfn:fp} yields the following result.

\begin{thm}\label{thm:timeaddition}
Let $k\in\nset$, $N\in\nset\cup\{\infty\}$, $p_1,\dots,p_k\in\rset^n$, $c_1,\dots,c_k\in\rset$, and $t_1,\dots,t_k\in [0,\infty)$. \ Set $\tau := \min_i t_i$ and $s := (s_\alpha)_{\alpha\in\nset_0^n:|\alpha|\leq N}$ with
\[s_\alpha := \int_{\rset^n} x^\alpha~\diff\mu(x) \qquad\text{and}\qquad \mu_0(x) := \sum_{i=0}^k c_i\cdot \Theta_{t_i}(x-p_i).\]
Then for all $t\in [-\tau,\infty)$ we have that the sequence $\fp_s(t)$ is represented by
\[\mu_t(x) := \sum_{i=0}^k c_i\cdot \Theta_{t_i+t}(x-p_i).\]
\end{thm}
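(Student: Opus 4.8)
The plan is to reduce the whole statement to a single heat kernel and then to recognize each summand $\Theta_{t_i}(\,\cdot\,-p_i)$ as a time snapshot of the heat flow, so that propagating it by the polynomials $\fp$ simply advances the kernel's parameter. Two structural facts do the work: the linearity of $\fp$ from \Cref{cor:pProperties}(iii), and the fact that a shifted heat kernel is itself a solution of \eqref{eq:heat}, together with the semigroup property of $\Theta$. I would also stress at the outset that the interval $[-\tau,\infty)$ is exactly the set of $t$ for which every shifted parameter $t_i+t$ remains in $[0,\infty)$; this is precisely what is needed for each $\Theta_{t_i+t}(\,\cdot\,-p_i)$ to be a well-defined measure, and it is this (rather than any positivity of the signed combination) that limits the statement to $t\ge-\tau$.

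Concretely, I would write $s=\sum_{i}c_i\,s^{(i)}$ with $s^{(i)}:=(s^{(i)}_\alpha)_\alpha$ and $s^{(i)}_\alpha:=\int_{\rset^n}x^\alpha\,\Theta_{t_i}(x-p_i)\,\diff x$, so that \Cref{cor:pProperties}(iii) gives $\fp_s=\sum_i c_i\,\fp_{s^{(i)}}$ and it suffices to handle one term; summation then recovers $\mu_t$, the admissible range shrinking from each individual $t\ge -t_i$ to the common $t\ge\max_i(-t_i)=-\tau$. For a single term I would first dispose of $t\ge 0$. When $t_i>0$ the kernel $\Theta_{t_i}(\,\cdot\,-p_i)$ lies in $\cS(\rset^n)$, hence is admissible initial data for \eqref{eq:heat}, and by \eqref{eq:heatSol} together with translation invariance and the semigroup property $\Theta_{\nu t}*\Theta_{t_i}=\Theta_{\nu t+t_i}$ the solution is $\Theta_{t_i+\nu t}(\,\cdot\,-p_i)$. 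Taking $\nu=1$ as in the proof of \Cref{lem:moments}, the combination of \Cref{lem:moments} and \Cref{dfn:fp} identifies its moments with $\fp_{s^{(i)}}(t)$; that is, $\fp_{s^{(i)}}(t)$ is represented by $\Theta_{t_i+t}(\,\cdot\,-p_i)$ for all $t\ge 0$.

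It remains to extend this identity to $t\in[-t_i,0)$ and to cover the degenerate cases $t_i=0$ or $t_i+t=0$, and this is where the main obstacle lies: \Cref{lem:moments} is stated only for Schwartz initial data, whereas a point mass $\delta_{p_i}$ (which appears when $t_i=0$, and as the endpoint $t=-t_i$) is not Schwartz. I would circumvent this by polynomiality. Setting $G^{(i)}_\alpha(\sigma):=\int_{\rset^n}x^\alpha\,\Theta_\sigma(x-p_i)\,\diff x$ for $\sigma>0$, differentiation under the integral, the heat equation in the parameter $\sigma$, and integration by parts exactly as in \Cref{lem:moments} give $\tfrac{\diff}{\diff\sigma}G^{(i)}_\alpha=\sum_j\alpha_j(\alpha_j-1)\,G^{(i)}_{\alpha-2e_j}$; hence each $G^{(i)}_\alpha$ is a polynomial in $\sigma$, and $t\mapsto G^{(i)}_\alpha(t_i+t)$ satisfies \eqref{eq:momInduction} with the same value $G^{(i)}_\alpha(t_i)=s^{(i)}_\alpha$ at $t=0$ as $\fp_{s^{(i)},\alpha}$. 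By the uniqueness built into \Cref{dfn:fp} the two polynomials coincide on all of $\rset$, so $\fp_{s^{(i)},\alpha}(t)=G^{(i)}_\alpha(t_i+t)$ identically; for $t\ge-t_i$ we have $t_i+t\ge 0$, and since $\Theta_\sigma(\,\cdot\,-p_i)\to\delta_{p_i}$ as $\sigma\to0^+$ the right-hand side is the genuine $\alpha$-moment of $\Theta_{t_i+t}(\,\cdot\,-p_i)$ even at the boundary $t_i+t=0$. Equivalently, one may invoke \Cref{cor:pProperties}(ii) in the form $\fp_{s^{(i)}}(t)=\fp_{\fp_{(p_i^\alpha)_\alpha}(t_i)}(t)=\fp_{(p_i^\alpha)_\alpha}(t_i+t)$, reducing everything to the single base identity that the $\fp$-polynomials generated by the point-mass sequence $(p_i^\alpha)_\alpha$ reproduce the heat-kernel moments; the only care needed is to confirm the boundary value $p_i^\alpha$, which is again the limit $\Theta_\sigma(\,\cdot\,-p_i)\to\delta_{p_i}$. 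Recombining by linearity over $t\ge-\tau$ then yields the claimed representing measure $\mu_t$.
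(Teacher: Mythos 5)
Your proof is correct and follows essentially the same route as the paper's: reduce to a single kernel by linearity, use the semigroup property $\Theta_{t}*\Theta_{t_1}=\Theta_{t_1+t}$ for the forward and backward time shifts, and handle the endpoint $t=-t_i$ via the convergence of the moments of $\Theta_{\sigma}(\,\cdot\,-p_i)$ to those of $\delta_{p_i}$ as $\sigma\to 0^{+}$. The only difference is one of explicitness: you justify the negative-time step by the polynomial/ODE uniqueness argument for $G^{(i)}_{\alpha}(\sigma)$, where the paper simply invokes the semigroup identity in the form $\Theta_{-t}*\Theta_{t_1+t}=\Theta_{t_1}$.
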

\begin{proof}
By \Cref{lem:moments} and linearity of the integral (moments), in the measure $\mu_t(x)$ it is sufficient to show the statement for $k=1$. \ Now, for $t\in (-t_1,0)$, the statement follows from $\Theta_t * \Theta_{t_1-t} = \Theta_{t_1}$; for $t\in (0,\infty)$, it follows from $\Theta_t*\Theta_{t_1} = \Theta_{t_1 + t}$. \ It remains to treat the special case $t=-t_1$. \ This case follows from
\[\int_{\rset^n} x^\alpha\cdot\Theta_{t}(x-x_0)~\diff x\quad \xrightarrow{t\to 0}\quad x_0^\alpha = \int_{\rset^n} x^\alpha~\diff\delta_{x_0}(x)\]
for all $\alpha\in\nset_0^n$.
\end{proof}

\begin{rem}
Observe that in the previous result, $t<-\tau$ is not possible yet; that is, it remains an open question whether those values are accessible. \ To visualize this, take $\delta_0$, then the moments are $s_0(t)=1$, $s_1(t)=0$, and $s_2(t)=2t$. \ That is, for $t>0$ we have one Gaussian distribution but for $t<0$ we have $s_2(t) < 0$ and with $s_0(t)=1$ this implies that this moment sequence can not be represented by one point evaluation, and at least two (signed) point evaluation are required. \ Hence, additional point evaluations appear. \exmsymbol
\end{rem}

For $n\in\nset$ and $d\in\nset$ we denote by $\cS_{d,n}$ (short $\cS_d$) the set (moment cone) of all truncated moment sequences $s=(s_\alpha)_{\alpha:|\alpha|\leq d}$ in $n$ variables. \ By $\cS_{\infty,n}$ (short $\cS_\infty$) we denote the set of all moment sequences in $n$ variables, i.e., with no restrictions on $|\alpha|$ \ Also, in what follows, if the index $n$ is dropped then $n$ is any $n\in\nset$.

\begin{dfn}\label{dfn:intervall}
Let $n\in\nset$ and $d\in\nset\cup\{\infty\}$. \ For $s\in\cS_d$ we define
\[\fI_s := \{t\in\rset \,|\, \fp_s(t)\in\cS_d\}.\]
\end{dfn}

In the case $d=0$ we have $\cS_0 = \rset_{\geq 0}$ and therefore $\fp_s = (\fp_{s,\alpha})_{|\alpha|=0}$ has only the polynomial $\fp_{s,0}(t) = s_0(0)$, i.e., $\fI_s=\rset$. \ In the case $d=1$ we have $\cS_1 = \{(s_0(0),s_{e_1}(0),\dots,s_{e_n}(0))\in\rset^{n+1} \,|\, s_0 >0\}\cup\{0\}$. \ But then only the restriction $\fp_{s,0}(t) = s_0(0) > 0$ exists, which is fulfilled for all $t\in\rset$; or if $s_0(0)=0$ then we have $s_{e_1}(0)=\dots=s_{e_n}(0)=0$, i.e., $\fp_s(t)=0$ for all $t\in\rset$ and again $\fI_s=\rset$. \ Hence, $d=0$ and $d=1$ for any $n\in\nset$ are the trivial cases:
\[s\in\cS_0\ \text{or}\ \cS_1 \quad\Rightarrow\quad \fI_s = \rset.\]
Additionally, $s=0$ also implies $\fI_0=\rset$. \ In all other cases, we have the following result.

\begin{thm}\label{thm:interval}
Let $n\in\nset$, $d\geq 2$ or $d=\infty$ and $s\in\cS_d\setminus\{0\}$. \ Then
\begin{equation*}
\fI_s = [-\fd_s,\infty) \quad\text{or}\quad (-\fd_s,\infty)
\end{equation*}
with
\begin{equation*}
\fd_s \in \left[0, \frac{s_{2e_1}(0) + \dots + s_{2e_n}(0)}{2n\cdot s_0(0)}\right].
\end{equation*}
For $d=\infty$ we always have
\[\fI_s = [-\fd_s,\infty)\]
and for $d\geq 2$ (finite) we have
\[\fI_s = [-\fd_s,\infty) \quad\text{if and only if}\quad \fp_s(-\fd_s)\in\partial\cS_d\cap\cS_d.\]
\end{thm}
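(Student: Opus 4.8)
The plan is to show, in order, that $\fI_s$ contains $[0,\infty)$, that it is closed under adding nonnegative reals (hence an interval unbounded to the right), that its left endpoint is finite with the stated bound, and finally to decide membership of that endpoint. First I would record that $s_0(0)>0$: since $s\in\cS_d\setminus\{0\}$, any representing measure $\mu\geq 0$ of $s$ has total mass $s_0(0)$, and $s_0(0)=0$ would force $\mu=0$ and thus $s=0$. Next, $[0,\infty)\subseteq\fI_s$: for $t\geq 0$ the sequence $\fp_s(t)$ is the moment sequence of $\Theta_{\nu t}*\mu\geq 0$ (for finite $d$ one may take $\mu$ finitely atomic and apply \Cref{thm:timeaddition} with all $t_i=0$, so that $\tau=0$). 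Combining this with the semigroup property $\fp_{\fp_s(t_1)}(t_2)=\fp_s(t_1+t_2)$ from \Cref{cor:pProperties}(ii) makes $\fI_s$ upward closed: if $t_1\in\fI_s$ then $\fp_s(t_1)\in\cS_d$, and applying the previous step to this sequence yields $\fp_s(t_1+t_2)\in\cS_d$ for all $t_2\geq 0$, i.e.\ $[t_1,\infty)\subseteq\fI_s$. With $0\in\fI_s$ this gives $(-\fd_s,\infty)\subseteq\fI_s\subseteq[-\fd_s,\infty)$, where $-\fd_s:=\inf\fI_s$, so $\fI_s$ equals $[-\fd_s,\infty)$ or $(-\fd_s,\infty)$ and $\fd_s\geq 0$.

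For the upper bound on $\fd_s$ I would test against the nonnegative polynomial $x_1^2+\dots+x_n^2$, whose degree $2$ does not exceed $d$. Membership $\fp_s(t)\in\cS_d$ forces $L_{\fp_s(t)}(x_1^2+\dots+x_n^2)\geq 0$, and by the second-order moments computed in \Cref{exm:fpalphas} (with $\nu=1$, as may be assumed throughout) we have $L_{\fp_s(t)}(x_1^2+\dots+x_n^2)=\sum_{i=1}^n s_{2e_i}(t)=\sum_{i=1}^n s_{2e_i}(0)+2n\,s_0(0)\,t$, an affine function of $t$ with positive slope $2n\,s_0(0)$. Hence $t\in\fI_s$ forces $t\geq -\tfrac{s_{2e_1}(0)+\dots+s_{2e_n}(0)}{2n\,s_0(0)}$, which simultaneously shows $\fd_s<\infty$ (so $\fI_s\neq\rset$) and $\fd_s\leq \tfrac{s_{2e_1}(0)+\dots+s_{2e_n}(0)}{2n\,s_0(0)}$.

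It remains to decide whether $-\fd_s\in\fI_s$, and here continuity of $t\mapsto\fp_s(t)$ (each component a polynomial in $t$) is combined with the topology of the cone. For $d=\infty$ I would invoke Haviland's theorem to write $\cS_\infty=\{s'\,:\,L_{s'}(p)\geq 0\ \text{for all}\ p\geq 0\ \text{on}\ \rset^n\}$, an intersection of closed half-spaces and hence closed in the pointwise topology; since $\fp_s(t)\to\fp_s(-\fd_s)$ componentwise as $t\downarrow -\fd_s$ and each such $\fp_s(t)\in\cS_\infty$, the limit lies in $\cS_\infty$, giving $\fI_s=[-\fd_s,\infty)$. For finite $d$ the cone $\cS_d$ may fail to be closed, so this conclusion is no longer automatic, and I would prove the stated equivalence by hand. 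If $\fp_s(-\fd_s)\in\partial\cS_d\cap\cS_d$, then $\fp_s(-\fd_s)\in\cS_d$, so $-\fd_s\in\fI_s$ and $\fI_s=[-\fd_s,\infty)$. Conversely, if $\fI_s=[-\fd_s,\infty)$, i.e.\ $\fp_s(-\fd_s)\in\cS_d$, then every $t<-\fd_s$ has $t\notin\fI_s$ and hence $\fp_s(t)\notin\cS_d$; letting $t\uparrow -\fd_s$ exhibits $\fp_s(-\fd_s)$ as a limit of points outside $\cS_d$, so $\fp_s(-\fd_s)\in\partial\cS_d$, and together with $\fp_s(-\fd_s)\in\cS_d$ this yields $\fp_s(-\fd_s)\in\partial\cS_d\cap\cS_d$.

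I expect the main difficulty to be precisely this endpoint analysis, and in particular the contrast between the two regimes: the whole argument turns on the closedness of $\cS_\infty$ (clean via Haviland) as against the possible non-closedness of the truncated cone $\cS_d$ for finite $d$, which is exactly the reason the finite case must carry the extra hypothesis $\fp_s(-\fd_s)\in\partial\cS_d\cap\cS_d$ rather than holding unconditionally. A secondary point requiring care is the inclusion $[0,\infty)\subseteq\fI_s$ for $d=\infty$, where a general (possibly non-atomic) representing measure $\mu$ must be convolved with the heat kernel and the moments of $\Theta_{\nu t}*\mu$ matched to $\fp_s(t)$, since \Cref{thm:timeaddition} is stated only for finite sums of shifted heat kernels.
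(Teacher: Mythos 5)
Your proposal is correct and its overall architecture matches the paper's: upward closedness of $\fI_s$ via finitely atomic representation plus Gaussian smoothing and the semigroup law, the bound on $\fd_s$ by testing $L_{\fp_s(t)}$ against $x_1^2+\dots+x_n^2$, and the endpoint analysis via continuity of $t\mapsto\fp_s(t)$ against the closedness (resp.\ possible non-closedness) of the cone. The one genuine divergence is how $d=\infty$ is handled, and it is exactly the point you flag as delicate. You propose to show $[0,\infty)\subseteq\fI_s$ for $s\in\cS_\infty$ by convolving a general, possibly non-atomic representing measure $\mu$ with $\Theta_{\nu t}$ and matching the moments of $\Theta_{\nu t}*\mu$ to $\fp_s(t)$; this is true but requires a Fubini argument plus a verification that these moments satisfy the recursion (\ref{eq:momInduction}), which you leave open. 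The paper sidesteps this entirely: since $\fp_{s,\alpha}$ depends only on $s_\beta$ with $\beta\leq\alpha$, one has $\fp_s(t)|_k=\fp_{s|_k}(t)$, the truncated heat distances $\fd_{s|_k}$ are non-increasing, and $(-\fd_s,\infty)\subseteq\bigcap_k(-\fd_{s|_k},\infty)\subseteq\fI_s$ because a sequence all of whose truncations lie in $\cS_k$ has a Riesz functional nonnegative on nonnegative polynomials and is therefore in $\cS_\infty$ by Haviland. So the finite-$d$ case (where Richter's theorem supplies atoms and \Cref{thm:timeaddition} applies verbatim) does all the work, and your missing lemma is never needed; you may want to adopt that reduction. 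Your endpoint argument for $d=\infty$ (closedness of $\cS_\infty$ as an intersection of half-spaces $L_{(\cdot)}(p)\geq 0$) is the paper's (b-ii) argument phrased topologically, and your two-sided argument for the finite-$d$ equivalence --- in particular deducing $\fp_s(-\fd_s)\in\partial\cS_d$ from its being a limit of the points $\fp_s(t)\notin\cS_d$, $t\uparrow-\fd_s$ --- is actually more explicit than the paper's one-line ``follows directly.''
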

\begin{proof}
We prove the statement in (a) below for finite $d\geq 2$, and then in (b) below for $d=\infty$.

(a): Let $d\in\nset$ with $d\geq 2$ and $s\in\cS_d\setminus\{0\}$.

(a-i) We show that if $c\in\fI_s\neq\emptyset$ then $[c,\infty)\subseteq\fI_s$: By \Cref{dfn:intervall} we have $0\in\fI_s$, i.e., $\fI_s\neq\emptyset$. \ Let $c\in\fI_s$, then by \Cref{dfn:intervall} we have $s':=\fp_s(c)\in\cS_d$ and since $d\geq 2$ is finite by Richter's Theorem \cite[Satz 4]{richte57} there exists an at most $k = \binom{n+d}{d}$-atomic representing measure
\[\mu = \sum_{i=1}^k c_i\cdot\delta_{x_i}\]
of $s'$ with $c_i>0$ and $x_i\in\rset^n$. \ But by \Cref{thm:timeaddition} for all $t>0$ we have that $\fp_s(c+t)$ is represented by a non-negative Gaussian mixture, i.e., all moments are finite. \ Hence $\fp_s(c+t)\in\cS_d$ for all $t>0$ and $[c,\infty)\subseteq\fI_s$.

(a-ii) We show that $\fd_s:= - \inf_{t\in\fI_s} t \leq \frac{s_{2e_1}(0) + \dots + s_{2e_n}(0)}{2n\cdot s_0(0)}$: The $\fp_{s,\alpha}$ from \Cref{exm:fpalphas} show that
\begin{equation}\label{eq:onePoint}
L_{\fp_s(t)}(x_1^2 + \dots + x_n^2) = \sum_{i=1}^n \fp_{s,2e_n}(t) = \sum_{i=1}^n s_{2e_1}(0) + 2ns_0(0)\cdot t\geq 0
\end{equation}
which implies the bound on $t$ resp.\ its infimum $\fd_s$. \ By (a-i) we have $(-\fd_s,\infty)\subseteq\fI_s$.

(a-iii) ``$\fI_s=[-\fd_s,\infty)$ if and only if $\fp_s(-\fd_s)\in\cS_d$'' follows directly from (a-ii) and \Cref{dfn:intervall}.

(b) We now prove the statements for $d=\infty$.

(b-i) We show $(-\fd_s,\infty)\subseteq\fI_s$: Let $k\in\nset$, $s=(s_\alpha)_{\alpha\in\nset_0^n}\in\cS_\infty$, and denote by $s|_k \in\cS_k$ the truncated moment sequence up to order $k$, i.e., $s|_k := (s_\alpha)_{|\alpha|\leq k}$. \ Then $\fd_{s|_k}$ is a non-increasing sequence $\geq 0$. \ Hence,
\[\fd_s := \lim_{k\to\infty} \fd_{s|_k} \geq 0\]
exists and $\fd_s\leq \fd_{s|k}$ implies
\[(-\fd_s,\infty)\subseteq \bigcap_{k\in\nset} (-\fd_{s|_k},\infty) \subseteq \fI_s.\]

(b-ii) We show $-\fd_s\in\fI_s$: Since $(-\fd_s,\infty)\subseteq \fI_s$ we have that $\fp_s(-\fd_s+\varepsilon)\in\cS_\infty$ is a moment sequence for all $\varepsilon>0$. \ Hence,
\[L_{\fp_s(-\fd_s+\varepsilon)}(p)\geq 0 \tag{$*$}\]
for all $p\in\rset[x_1,\dots,x_n]$ with $p\geq 0$ and $\varepsilon>0$. \ But since ($*$) is continuous in $\varepsilon$ we have
\[L_{\fp_s(-\fd_s)}(p) = \lim_{\varepsilon\to 0} L_{\fp_s(-\fd_s+\varepsilon)}(p) \geq 0\]
for all $p\in\rset[x_1,\dots,x_n]$ with $p\geq 0$. \ Therefore, $L_{\fp_s(-\fd_s)}$ is a moment functional, $\fp_s(-\fd_s)\in\cS_\infty$ is a moment sequence, and $-\fd_s\in\fI_s$.
\end{proof}

\begin{cor}
Let $d\geq 2$ or $d=\infty$ and $s\in\cS_d\setminus\{0\}$ with $\fd_s\in\fI_s$. \ If
\[\fd_s := \frac{s_{2e_1}(0) + \dots + s_{2e_n}(0)}{2n\cdot s_0(0)},\]
then $\fp_s(-\fd_s)$ is represented by $s_0(0)\cdot\delta_0$, and if, additionally, $\fd_s>0$, then $s$ is represented by the Gaussian $s_0(0)\cdot\Theta_{\fd_s}$.
\end{cor}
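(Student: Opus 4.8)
The plan is to exploit the explicit formula for the order-two moments recorded in \Cref{exm:fpalphas}, together with a positivity argument, to identify the representing measure at the critical time $t=-\fd_s$, and then to transport that measure forward in time by means of the semigroup structure from \Cref{cor:pProperties} and the Gaussian-mixture result \Cref{thm:timeaddition}.

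First I would evaluate the Riesz functional of $\fp_s(-\fd_s)$ on the polynomial $x_1^2+\dots+x_n^2$. By \Cref{exm:fpalphas} (equivalently, by \eqref{eq:onePoint}),
\[
L_{\fp_s(t)}(x_1^2+\dots+x_n^2)=\sum_{i=1}^n s_{2e_i}(0)+2n\,s_0(0)\cdot t,
\]
and substituting the critical value $t=-\fd_s=-\frac{s_{2e_1}(0)+\dots+s_{2e_n}(0)}{2n\,s_0(0)}$ makes the right-hand side vanish. Since $\fd_s\in\fI_s$ by hypothesis, the sequence $\fp_s(-\fd_s)$ lies in $\cS_d$ and therefore admits a representing measure $\mu$ (in the finite case a finitely atomic one, supplied by Richter's Theorem \cite[Satz 4]{richte57}; in the case $d=\infty$ directly from $\fp_s(-\fd_s)\in\cS_\infty$). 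As $\mu$ is a positive measure and $x_1^2+\dots+x_n^2\geq 0$, the identity $\int_{\rset^n}(x_1^2+\dots+x_n^2)\,\diff\mu=L_{\fp_s(-\fd_s)}(x_1^2+\dots+x_n^2)=0$ forces $\supp\mu\subseteq\{0\}$, so $\mu=c\cdot\delta_0$ for some $c\geq 0$. Matching zeroth moments and using that $s_0$ is constant in $t$ (\Cref{lem:moments}) gives $c=L_{\fp_s(-\fd_s)}(1)=s_0(0)$, which proves the first assertion.

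For the second assertion I would assume $\fd_s>0$ and flow the point mass forward. By the semigroup property \Cref{cor:pProperties}(ii),
\[
s=\fp_s(0)=\fp_{\fp_s(-\fd_s)}(\fd_s),
\]
so $s$ is the time-$\fd_s$ evolution of the sequence represented by $s_0(0)\cdot\delta_0$. Applying \Cref{thm:timeaddition} with a single node ($k=1$, $c_1=s_0(0)$, $p_1=0$, $t_1=0$, hence $\tau=0$ and $\mu_0=s_0(0)\cdot\Theta_0=s_0(0)\cdot\delta_0$) and taking $t=\fd_s\in(0,\infty)$ shows that $\fp_{\fp_s(-\fd_s)}(\fd_s)=s$ is represented by $s_0(0)\cdot\Theta_{\fd_s}$, as claimed.

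I expect the only delicate step to be the support argument. One must ensure that $x_1^2+\dots+x_n^2$ is integrable against $\mu$ (automatic, since $\rset[x_1,\dots,x_n]\subseteq L^1(\mu)$ for moment sequences, and since the polynomial has degree $2\leq d$ in the truncated case) and that the vanishing of its integral genuinely pushes every atom of $\mu$ to the origin. Once this is in place, the remaining steps are routine invocations of the semigroup identity and of \Cref{thm:timeaddition}, both already established above.
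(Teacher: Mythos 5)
Your proposal is correct and follows essentially the same route as the paper: use \eqref{eq:onePoint} to show $L_{\fp_s(-\fd_s)}(x_1^2+\dots+x_n^2)=0$, conclude that any representing measure is supported at the origin (hence equals $s_0(0)\cdot\delta_0$ by matching the zeroth moment), and then apply \Cref{thm:timeaddition} to flow this point mass forward by time $\fd_s$. Your version merely spells out the positivity/support argument and the normalization $c=s_0(0)$ in more detail than the paper does.
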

\begin{proof}
Since $\fp_s(-\fd_s)\in\cS_d$ we have by (\ref{eq:onePoint})
\[L_{\fp_s(-\fd_s)}(x_1^2 + \dots + x_n^2) = 0,\]
i.e., the non-trivial representing measure of $\fp_s(-\fd_s)$ is supported only at $x=0$. \ If $\fd_s>0$, then from \Cref{thm:timeaddition} it follows that $s$ is represented by $s_0(0)\cdot\Theta_{\fd_s}$.
\end{proof}

\begin{dfn}
We call $\fd_s$ the (\emph{heat}) \emph{distance} of $s$ to the boundary of $\cS_d$.
\end{dfn}

The terminology of heat distance is due to the connection to the heat equation (\ref{eq:heat}), and to avoid confusion with the Euclidean distance in $\rset^N\supset\cS_d$ with $N = \left(\begin{smallmatrix} n+d\\ n\end{smallmatrix}\right)$.

\begin{dfn}\label{dfn:heatCurve}
Let $d\geq 2$ or $d=\infty$ and $s\in\cS_d$. \ We define
\begin{equation}\label{eq:heatCurve}
\fC_s := \fp_s(\fI_s)\subset\cS_d
\end{equation}
and we call $\fC_s$ a (\emph{heat}) \emph{curve}.
\end{dfn}

\Cref{lem:moments} and \Cref{thm:interval} imply that the \emph{heat curves} $\fC_s$ are given by polynomials and are completely contained in $\cS_d$. \ In fact one can interpret the heat curves $\fC_s$ also in the following way. \ For any sequence $s$ (not necessarily a moment sequence) we have seen that $\fp_s$ determines a curve. \ Then we ask if and when this curve $\fp_s(\rset)$ lies in the moment cone $\cS_d$. \ We get $\fC_s = \fp_s(\rset)\cap\cS_d$. \ So the heat equation generates a flow $\fp$ of sequences $s\mapsto \fp_s$ and the heat curves $\fC_s$ are the trajectories (or parts of the trajectories) which are contained in the moment cone $\cS_d$. \ While the ``future'' ($\fp_s(t)$ with $t>0$) of a moment sequence $s\in\cS_d$ always remains in $\cS_d$, the ``past'' ($t<0$) does not. \ If $s$ was an interior moment sequence, then $\fp_s$ is a well-defined path to the boundary of $\cS_d$. \ If $\fp_s(-\fd_s)\in\partial\cS_d$ also is a moment sequence (i.e.\ $\in\cS_d$), it is simpler to find a representing measure for $\fp_s(-\fd_s)$ than for $s$ (rank reduction in the Hankel matrices). \ Then, having found a (finitely atomic) representing measure of $\fp_s(-\fd_s)$ allows to construct a Gaussian mixture representation by \Cref{thm:timeaddition}.

\Cref{thm:interval} implies that two heat curves either coincide or are disjoint. \ This is stated in the following corollary.

\begin{cor}\label{cor:heatCurvesDisjoint}
Let $n\in\nset$, $d\geq 2$ or $d=\infty$. \ For $s,s'\in\cS_d$ we have either
\begin{enumerate}[(i)]
\item $\fC_s = \fC_{s'}$
\end{enumerate}
or
\begin{enumerate}[(i)]\setcounter{enumi}{1}
\item $\fC_s \cap \fC_{s'} = \emptyset$.
\end{enumerate}
\end{cor}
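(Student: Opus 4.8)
The plan is to establish the dichotomy by proving its contrapositive: if $\fC_s \cap \fC_{s'} \neq \emptyset$, then $\fC_s = \fC_{s'}$. The engine of the argument is the flow (group) property recorded in \Cref{cor:pProperties}(ii), namely $\fp_{\fp_s(t_1)}(t_2) = \fp_s(t_1+t_2)$, which holds for \emph{all} $t_1,t_2\in\rset$. This identity says that the polynomial curves $\fp_s$ are orbits of a flow on the space of sequences, and the heat curves $\fC_s$ are precisely the portions of these orbits lying inside the moment cone $\cS_d$. Since orbits of a flow partition the underlying space, one expects the restricted orbits to partition $\cS_d$ as well, provided the restriction is compatible with the flow --- which is exactly what \Cref{dfn:intervall} guarantees, as $\fI_s$ is defined to be the set of parameters for which $\fp_s$ lands in $\cS_d$.

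Concretely, suppose the two curves meet, so that $r := \fp_s(t_0) = \fp_{s'}(t_0')$ for some $t_0\in\fI_s$ and $t_0'\in\fI_{s'}$. I would first show $\fC_s\subseteq\fC_{s'}$. Take an arbitrary point $q = \fp_s(t)\in\fC_s$ with $t\in\fI_s$. Applying \Cref{cor:pProperties}(ii) twice --- first to rebase the parametrization at $r$, then to shift along $s'$ --- gives $q = \fp_s(t) = \fp_r(t-t_0) = \fp_{\fp_{s'}(t_0')}(t-t_0) = \fp_{s'}(t_0' + t - t_0)$. Thus $q$ lies on the curve $\fp_{s'}$ at parameter $t'' := t_0' + t - t_0$.

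It remains to verify that $t''\in\fI_{s'}$, i.e.\ that $\fp_{s'}(t'')\in\cS_d$; but $\fp_{s'}(t'') = q = \fp_s(t)\in\cS_d$ because $t\in\fI_s$ and $\fC_s\subset\cS_d$ by \Cref{thm:interval}. Hence $t''\in\fI_{s'}$ by \Cref{dfn:intervall}, and therefore $q\in\fp_{s'}(\fI_{s'}) = \fC_{s'}$. This proves $\fC_s\subseteq\fC_{s'}$, and the reverse inclusion follows by exchanging the roles of $s$ and $s'$. I do not anticipate a genuine obstacle here; the only point requiring care is that the reparametrization shift $t'' - t_0' = t - t_0$ may be negative, so one must invoke the full-line validity of the flow identity in \Cref{cor:pProperties}(ii) rather than a mere semigroup property on $[0,\infty)$, and one must observe that membership in $\fI_{s'}$ is automatic the moment $q\in\cS_d$ is known. (That each curve is nonempty, hence that the two alternatives are genuinely exclusive, follows since $0\in\fI_s$ gives $s=\fp_s(0)\in\fC_s$.)
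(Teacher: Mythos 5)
Your proof is correct. The paper actually gives no proof of this corollary at all---it merely asserts that it ``follows from Theorem~\ref{thm:interval}''---so your argument via the flow identity $\fp_{\fp_s(t_1)}(t_2)=\fp_s(t_1+t_2)$ of Corollary~\ref{cor:pProperties}(ii) is a valid and complete filling-in of the omitted detail; indeed it is arguably cleaner than the paper's pointer, since you never need the interval structure of $\fI_s$ from Theorem~\ref{thm:interval}, only the full-line group property and the definition of $\fI_s$, and you correctly flag the one delicate point (that the reparametrization shift may be negative, so the semigroup property on $[0,\infty)$ alone would not suffice).
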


Hence, each heat curve $\fC_s$ is an equivalence class in the moment cone $\cS_d$.

\begin{dfn}\label{dfn:equiv}
We define $\sim_\fp$ in $\cS_d$ by:\quad $s \sim_\fp s'\ :\Leftrightarrow\ \fC_s = \fC_{s'}$.
\end{dfn}

The heat curves $\fC_s$ are the equivalence classes in $\sim$: $[s]=\fC_s$. \ For $d\geq 2$ finite, $\fI_s=[-\fd_s,\infty)$ means that the heat curve of $s$ starts at a boundary moment sequence $s'$ and every other point on $\fC_s$ is an interior moment sequence which has a unique path to the boundary moment sequence $s'$. \ That implies the following.

\begin{thm}\label{thm:pBijParametrization}
Let $n\in\nset$ and $d\in\nset\cup\{\infty\}$ with $d\geq 2$. \ Set
\[\cat_d := \bigcup_{s\in\partial\cS_d\cap\cS_d} \fC_s \subseteq\cS_d.\]
If $d$ is finite then
\begin{equation*}
\partial\cS_d\cap\cS_d\ \cong\ \cat_d/{\sim_\fp}
\end{equation*}
and
\begin{equation*}
\fp:(\partial\cS_d\cap\cS_d)\times [0,\infty)\bimap\cat_d,\ (s,t)\mapsto \fp_s(t)
\end{equation*}
is a bijective and polynomial map ($s\neq 0$) with the inverse map
\[s \mapsto (\fp_s(-\fd_s),\fd_s),\]
i.e., $\fp$ parametrizes precisely the subset $\cat_d$ of the moment cone $\cS_d$.

If $d=\infty$ we have
\begin{equation*}
\partial\cS_\infty\ \cong\ \cS_\infty/{\sim_\fp}
\end{equation*}
and for $s\neq 0$ we have the bijective and polynomial map
\begin{equation*}
\fp:\partial\cS_\infty\times [0,\infty)\bimap\cS_\infty,\ (s,t)\mapsto \fp_s(t),
\end{equation*}
i.e., $\fp$ parametrizes the moment cone $\cS_\infty$.
\end{thm}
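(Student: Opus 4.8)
The plan is to establish the geometric picture already announced in the text: every heat curve meets $\partial\cS_d\cap\cS_d$ in exactly one point, its starting point $\fp_s(-\fd_s)$, while all later points of the curve are interior moment sequences. Once this is in hand, the asserted bijections, the inverse map, and the two quotient identifications reduce to bookkeeping on top of \Cref{thm:interval} and \Cref{cor:heatCurvesDisjoint}. The engine I would use is a reformulation of the flow $\fp$ at the level of the Riesz functional. From the recursion (\ref{eq:momInduction}) and linearity one gets, for every $p\in\rset[x_1,\dots,x_n]$, the relation $\partial_t L_{\fp_s(t)}(p)=L_{\fp_s(t)}(\Delta p)$; solving this linear operator ODE on each finite-dimensional space of polynomials of bounded degree yields
\[
L_{\fp_s(t)}(p)=L_s(H_t\,p),\qquad H_t:=e^{t\Delta}=\sum_{k\geq 0}\tfrac{t^k}{k!}\Delta^k ,
\]
where $H_t$ is a finite sum on polynomials and, for $t>0$, coincides with convolution by the heat kernel, $(H_t p)(x)=\int_{\rset^n}\Theta_t(x-y)\,p(y)\,\diff y$.

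The decisive point I would exploit is that for $t>0$ the operator $H_t$ sends every nonzero $p\geq 0$ to a polynomial that is \emph{strictly} positive on all of $\rset^n$, since $\Theta_t>0$ everywhere and $p\geq 0$, $p\not\equiv 0$. Consequently, if $s\neq 0$ is a moment sequence with representing measure $\rho\neq 0$, then for every nonzero $p\geq 0$ of degree $\leq d$ one has $L_{\fp_s(t)}(p)=L_s(H_t p)=\int (H_t p)\,\diff\rho>0$, so $\fp_s(t)$ is an \emph{interior} moment sequence for all $t>0$; here I would invoke the standard description of $\inter\cS_d$ as the sequences with $L_s(p)>0$ for all nonzero $p\in\pos_{n,d}$, the dual cone to the nonnegative polynomials. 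Next I would fix $s\in\cS_d\setminus\{0\}$ and put $s^\ast:=\fp_s(-\fd_s)$, a moment sequence by \Cref{thm:interval}. The semigroup law \Cref{cor:pProperties}(ii) gives $\fp_{s^\ast}(r)=\fp_s(r-\fd_s)$, hence $\fI_{s^\ast}=[0,\infty)$ and $\fd_{s^\ast}=0$, and every $\fp_s(t)=\fp_{s^\ast}(t+\fd_s)$ with $t>-\fd_s$ is interior by the above. Finally $s^\ast$ is \emph{not} interior: otherwise a neighbourhood of $s^\ast$ would lie in $\cS_d$ and, by continuity of the polynomial curve $t\mapsto\fp_s(t)$, one would find $\fp_s(-\fd_s-\varepsilon)\in\cS_d$ for small $\varepsilon>0$, contradicting $-\fd_s=\inf\fI_s$. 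This gives $\fC_s\cap(\partial\cS_d\cap\cS_d)=\{s^\ast\}$.

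With this structure I would assemble the maps. For $s\in\partial\cS_d\cap\cS_d$ we have $\fd_s=0$ and $\fC_s=\fp_s([0,\infty))$, so $(s,t)\mapsto\fp_s(t)$ maps into $\cat_d$, and by the definition of $\cat_d$ its image is exactly $\cat_d$; for $d=\infty$ one has $\cat_\infty=\cS_\infty$ because \Cref{thm:interval} guarantees every curve reaches a boundary moment sequence, so $\fp$ parametrizes the full cone, yielding surjectivity. For injectivity I would take $\fp_{s'}(t')=\fp_{s''}(t'')$ with $s',s''\in\partial\cS_d\cap\cS_d$ and $t',t''\geq0$: the common value lies on both curves, so $\fC_{s'}=\fC_{s''}$ by \Cref{cor:heatCurvesDisjoint}, and uniqueness of the boundary point forces $s'=s''$; then $t'=t''$ because by (\ref{eq:onePoint}) the quantity $L_{\fp_{s'}(t)}(x_1^2+\dots+x_n^2)$ is affine in $t$ with slope $2n$ times the (positive) zeroth moment of $s'$, hence strictly increasing. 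Thus $\fp$ is a bijection, with inverse $r\mapsto(\fp_r(-\fd_r),\fd_r)$: writing $r=\fp_{s'}(t')$ for the boundary sequence $s'$ of its curve, the semigroup law gives $\fp_r(-\fd_r)=s'$ and $\fd_r=t'$. It is polynomial because each $\fp_{s,\alpha}(t)$ is a polynomial in $t$ by \Cref{lem:moments} whose coefficients are linear in the entries of $s$ by \Cref{cor:pProperties}(iii). Since the $\sim_\fp$-classes are precisely the heat curves and each carries a single boundary representative, the identifications $\partial\cS_d\cap\cS_d\cong\cat_d/{\sim_\fp}$ and $\partial\cS_\infty\cong\cS_\infty/{\sim_\fp}$ follow.

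The main obstacle will be the strict-positivity step that pins down the interior of a curve and singles out its boundary point. For finite $d$ this is clean, resting on the identity $L_{\fp_s(t)}=L_s\circ H_t$, the smoothing $H_t(\pos_{n,d}\setminus\{0\})$ into strictly positive polynomials for $t>0$, and the continuity argument for $s^\ast$. For $d=\infty$ the topological interior of $\cS_\infty$ is problematic, and the argument must be run with the functional notion of boundary, namely that $L_s$ annihilates some nonzero nonnegative polynomial. The strict positivity above shows the later points of a curve cannot be of this kind, while $\fd_{s^\ast}=0$ together with $\fp_{s^\ast}(-\varepsilon)\notin\cS_\infty$ identifies $s^\ast$ as a boundary sequence; making this last identification fully rigorous, so that it does not fall into the gap between $\cS_\infty$ and $\overline{\cS_\infty}$, is the delicate part, and is where \Cref{thm:interval} does the essential work.
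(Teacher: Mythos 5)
Your proposal is correct and is in fact more complete than the paper's own argument, though it travels a longer road. The paper's proof is very short: injectivity in $t$ comes from the strict monotonicity of $\fp_{s,2e_1}(t)=s_{2e_1}(0)+2s_0(0)\cdot t$ (essentially your observation about $L_{\fp_{s}(t)}(x_1^2+\dots+x_n^2)$), injectivity in $s$ from \Cref{cor:heatCurvesDisjoint}, and the inverse map from \Cref{thm:interval}; the facts that $\fd_s=0$ for $s\in\partial\cS_d\cap\cS_d$ and that each heat curve in $\cat_d$ carries exactly one boundary moment sequence are taken from the discussion preceding the theorem rather than argued inside the proof. Your central addition is to actually prove these facts, via the identity $L_{\fp_s(t)}=L_s\circ H_t$ with $H_t=e^{t\Delta}$ (which is \Cref{dfn:pt}/\Cref{lem:heatPoly} in operator form), the strict positivity of $H_tp=\Theta_t*p$ for nonzero $p\geq 0$ and $t>0$, and the dual description of $\inter\cS_d$ by strictly positive Riesz functionals. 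This buys a self-contained justification of the key geometric claim, at the cost of importing the standard duality between $\cS_d$ and the cone of nonnegative polynomials; an equivalent and slightly more economical route already available in the paper is to quote \Cref{thm:timeaddition}: for $t>0$ the sequence $\fp_s(t)$ has a representing Gaussian mixture with everywhere positive density, hence $L_{\fp_s(t)}(p)>0$ for every nonzero $p\geq 0$ of degree at most $d$.

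One inaccuracy to fix: for finite $d$, \Cref{thm:interval} does \emph{not} assert that $s^\ast:=\fp_s(-\fd_s)$ is a moment sequence; the interval $\fI_s$ may be open, in which case $s^\ast\notin\cS_d$ and $\fC_s\cap(\partial\cS_d\cap\cS_d)=\emptyset$. Your claim $\fC_s\cap(\partial\cS_d\cap\cS_d)=\{s^\ast\}$ should read ``$\subseteq\{s^\ast\}$, with equality precisely when $\fI_s$ is closed.'' This does not damage the argument, since the theorem only quantifies over curves through $\partial\cS_d\cap\cS_d$, for which the boundary point is present by hypothesis, but as stated the intermediate claim is false for general $s\in\cS_d\setminus\{0\}$. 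Your caution about the meaning of $\partial\cS_\infty$ is well placed: the paper's one-line treatment of $d=\infty$ glosses over exactly the point you identify as delicate.
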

\begin{proof}
Let $d$ be finite. \ Since by \Cref{exm:fpalphas} we have
\[\fp_{s,\alpha}(t) = s_\alpha(0) + 2s_0(0)\cdot t\]
for $\alpha=2e_1,\dots,2e_n$ with $s_0(0)\neq 0$ we find that for $s\in\partial\cS_d\cap\cS_d$ already the map
\[\fp_{s,2e_1}:[0,\infty)\to\fC_s,\ t\mapsto \fp_{s,2e_1}(t)\]
is bijective and hence $\fp_s:[0,\infty)\to\fC_s$ is bijective. \ But by \Cref{cor:heatCurvesDisjoint} two heat curves either coincide or are disjoint. \ This proves the bijectivity of $\fp$. \ The inverse map follows immediately from \Cref{thm:interval}.

For $d=\infty$ the moment cone $\cS_\infty$ is closed ($\partial\cS_\infty\cap\cS_\infty = \partial\cS_\infty$ and $\cat_\infty = \cS_\infty$).
\end{proof}

Note that $\cat_d$ is full dimensional; that is, $\cat_d$ has nonempty interior.

For $n=1$ the boundary moment sequences of the truncated moment cone can easily be described by their representing measures since the fundamental theorem of algebra holds and hence the boundary moment sequences are determined \cite{richte57}. \ We therefore study at first the $n=1$ case with $d$ finite.

Let $n=1$, $d\in\nset$, and $s=(s_{\alpha})_{\alpha=0}^{2d}$. \ Recall, the Hankel matrix of $s$ is defined by
\[\cH(s) := (s_{\alpha+\beta})_{\alpha,\beta=0}^d \in\rset^{(n+1)\times (n+1)}\]
and define
\[\cH(s) \succeq 0 \quad:\Leftrightarrow\quad v^T\cH(s)v \geq 0\ \text{for all}\ v\in\rset^{n+1},\]
and
\[\cH(s)\succ 0\quad:\Leftrightarrow\quad v^T\cH(s)v > 0\ \text{for all}\ v\in\rset^{n+1}.\]
Then we have the following result, see e.g.\ \cite[Thm.\ 9.15 and Cor.\ 9.16]{schmudMomentBook}.

\begin{lem}\label{lem:hankel1dim}
Let $d\in\nset$ and $s=(s_\alpha)_{\alpha=0}^{2d}$ a real sequence. \ The following are equivalent:
\begin{enumerate}[(i)]
\item $\cH(s)\succeq 0$ and $\cH(s)\not\succ 0$.

\item There exists a $k$-atomic measure $\mu$ with $k\leq d$ and an $a\geq 0$ such that
\begin{align*}
s_\alpha &= \int x^\alpha~\diff\mu(x)
\intertext{for all $\alpha=0,\dots,2d-1$ and}
s_{2d} &= \int x^{2d}~\diff\mu(x) + a,
\end{align*}
for some $a \ge 0$.
\end{enumerate}
If $v=(v_0,\dots,v_d)\in\rset^{d+1}$ with $v\neq 0$ and $v^T\cH(s) v=0$, then $\supp\mu\subseteq\cZ(f)$ with $f(x) = v_0 + v_1 x + \dots + v_d x^d$.
\end{lem}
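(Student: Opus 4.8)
The plan is to treat the two implications separately, using throughout the positive semidefinite bilinear form $\langle p,q\rangle:=L_s(pq)$ on the space $\rset[x]_{\le d}$ of real polynomials of degree at most $d$, whose Gram matrix in the monomial basis $1,x,\dots,x^d$ is exactly $\cH(s)$; thus $\cH(s)\succeq0$ says the form is positive semidefinite and $\cH(s)\succ0$ that it is an inner product. For (ii)$\Rightarrow$(i), I write $\mu=\sum_{i=1}^k c_i\delta_{x_i}$ with $c_i>0$ and distinct $x_i$, so that $\cH(s)=V+a\,e_de_d^T$, where $V=\sum_i c_i w_iw_i^T$ is assembled from the Vandermonde vectors $w_i=(1,x_i,\dots,x_i^d)^T$ and $e_d$ is the last coordinate vector. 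Since $V\succeq0$ and $a\ge0$, I get $\cH(s)\succeq0$ at once. For singularity I test against the coefficient vector of $\prod_{i=1}^k(x-x_i)$, a polynomial of degree $k\le d$: it is annihilated by $V$ (it vanishes at every node) and also by $a\,e_de_d^T$ whenever its $x^d$–coefficient is zero, i.e. whenever $k<d$ or $a=0$. This is the genuine boundary case at issue; the lone exception $k=d$ with $a>0$ is a lower principal representation of an interior sequence, for which $\cH(s)$ is in fact positive definite, so (ii) must be read as the boundary representation.

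For (i)$\Rightarrow$(ii), let $m\le d$ be the least degree of a nonzero $f$ in the radical $\cN=\{p:\langle p,p\rangle=0\}$, chosen monic; then the form is positive definite on $\rset[x]_{\le m-1}$, so Gram–Schmidt yields orthogonal polynomials obeying a three–term recurrence with positive coefficients, and $f$ is the resulting monic degree-$m$ member. Its zeros are the eigenvalues of the associated tridiagonal Jacobi matrix with positive off–diagonal entries, hence real and simple; call them $x_1,\dots,x_m$. I then let $\mu:=\sum_i c_i\delta_{x_i}$ be the Gauss quadrature measure with positive Christoffel weights, so that $L_s(p)=\int p\,\diff\mu$ for every $p$ with $\deg p\le 2m-1$.

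The crux, and the step I expect to be the main obstacle, is to upgrade this to $s_\alpha=\int x^\alpha\,\diff\mu$ for all $\alpha\le 2d-1$, since Gauss exactness only reaches degree $2m-1$, possibly far short of $2d-1$. Here positivity must be exploited to propagate the single relation $\cH(s)v=0$ (with $v$ the coefficients of $f$, giving $s_n=\sum_{j<m}\gamma_j s_{n-m+j}$ for $n\le m+d$) into a longer recurrence. My plan is to shift $v$ to the coefficient vectors $v^{(t)}$ of $x^tf$ for $t=1,\dots,d-m$; each product $\cH(s)v^{(t)}$ turns out to be supported in the last coordinate alone, and as long as the last entry of $v^{(t)}$ vanishes (that is, $t<d-m$) the scalar $(v^{(t)})^{T}\cH(s)v^{(t)}$ is zero, so positive semidefiniteness forces $\cH(s)v^{(t)}=0$ and extends the recurrence by one more moment. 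Iterating secures $s_n=\sum_{j<m}\gamma_j s_{n-m+j}$ for every $n\le 2d-1$, while at $n=2d$ positivity only yields a nonnegative residual $a:=s_{2d}-\sum_{j<m}\gamma_j s_{2d-m+j}\ge0$. Since the Gauss moments obey the same recurrence for all $n$ and agree with $s$ in degrees $0,\dots,m-1$, they agree with $s$ through order $2d-1$, giving $s_{2d}=\int x^{2d}\,\diff\mu+a$; this is exactly (ii) with $k=m\le d$.

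Finally, the support statement falls out uniformly. For any $v\ne0$ with $v^T\cH(s)v=0$ and $f=\sum_i v_ix^i$, the sequence $s$ and the moments of $\mu$ differ only in order $2d$, so $0=L_s(f^2)=\int f^2\,\diff\mu+v_d^2\,a$. Both summands are nonnegative, hence $\int f^2\,\diff\mu=0$, which forces $f$ to vanish at every atom; that is, $\supp\mu\subseteq\cZ(f)$.
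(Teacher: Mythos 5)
The paper offers no proof of this lemma at all---it is quoted with a pointer to \cite[Thm.~9.15 and Cor.~9.16]{schmudMomentBook}---so there is no in-paper argument to measure you against; what you have written is a self-contained proof along the classical lines, and it is essentially correct. The mechanism in (i)$\Rightarrow$(ii) is the right one: the minimal-degree monic kernel polynomial $f$ of degree $m\le d$, real simple zeros via the symmetrized Jacobi matrix, positive Christoffel weights $c_i=L_s(\ell_i^2)>0$ from positive definiteness in degree $\le m-1$, and then the propagation of the relations $L_s(x^\beta f)=0$ from $\beta\le d$ up to $\beta\le 2d-m-1$ by testing $\cH(s)$ against the shifted coefficient vectors of $x^tf$ and using that $w^T\!Aw=0$ forces $Aw=0$ when $A\succeq 0$; this propagation step is indeed the crux and your induction is sound, including the terminal case $t=d-m$ where only $a\ge 0$ survives. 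Two points you should write out in a final version: that the three-term recurrence with positive off-diagonal coefficients holds up to index $m$ (this uses $\langle xp,q\rangle=\langle p,xq\rangle$, valid because all degrees involved stay $\le d$), and that $f\in\cN$ gives the full vector identity $\cH(s)v=0$ (via Cauchy--Schwarz), which is the base case of your induction. The closing argument $0=L_s(f^2)=\int f^2\,\diff\mu+av_d^2$ for the support statement is clean and correct.

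Your caveat about (ii)$\Rightarrow$(i) is not cosmetic: the equivalence as literally stated is false, and you are right to flag it. For $d=1$ and $s=(1,0,1)$ one has $\cH(s)=I\succ 0$, yet (ii) holds with $\mu=\delta_0$, $k=1=d$, $a=1$. More generally, every sequence with $\cH(s)\succ 0$ admits a representation as in (ii) (decrease $s_{2d}$ until the Hankel matrix degenerates, represent the degenerate sequence, and absorb the difference into $a$), so condition (ii) is equivalent to $\cH(s)\succeq 0$ alone; the clause ``$\cH(s)\not\succ 0$'' breaks the equivalence unless (ii) is strengthened to require $k<d$ or $a=0$ (which is exactly what your construction delivers, since either $m<d$ or the recurrence determines $s_{2d}$ and $a=0$). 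This defect is harmless for the paper, which only invokes the direction (i)$\Rightarrow$(ii) together with the support statement, but the lemma as stated should be repaired.
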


In \cite{didio18gaussian} we already showed that any interior point in the moment cone has a Gaussian mixture representation. \ We will now use \Cref{thm:pBijParametrization} to show an easy way to compute it, at least for $n=1$.

\begin{thm}
Let $n=1$, $d\in\nset$, $j=0,1$, and $s=(s_\alpha)_{\alpha=0}^{2d+j}$ with $\cH(s)\succ 0$. \ For $j=1$ choose a real number $s_{2d+2}$, and for $j=0$ choose real numbers $s_{2d+1}$ and $s_{2d+2}$ such that
\[\cH(\tilde{s})\succ 0 \qquad\text{with}\qquad \tilde{s}:=(s_{\alpha})_{\alpha=0}^{2d+2}.\]
Then
\[\det(\cH(\fp_{\tilde{s}}(-t)))\]
has a smallest zero $\delta>0$ and $s$ is represented by the Gaussian mixture
\[\mu(x)=\sum_{i=1}^k c_i\cdot\Theta_{\delta}(x-x_i)\]
with $k\leq d+1$, $c_1,\dots,c_k>0$, and $x_1,\dots,x_k\in\rset$ the zeros of $f(x) = v_0 + v_1 x + \dots v_{d+1} x^{d+1}$, where $(v_0,\dots,v_{d+1})^T\in\ker\cH(\fp_{\tilde{s}}(-\delta))\setminus\{0\}$.
\end{thm}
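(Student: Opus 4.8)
The plan is to read the statement geometrically: running the heat flow \emph{backwards} on the extended sequence $\tilde s$ drives it toward the boundary of the order-$(2d+2)$ moment cone, the determinant of the top Hankel matrix detects the first boundary hit, the associated kernel vector produces a finitely atomic measure, and pushing that measure \emph{forward} again by the same time turns each atom into a Gaussian of width $\delta$. Concretely, I would first set $P(t):=\det\cH(\fp_{\tilde s}(-t))$. By \Cref{lem:moments} every entry of $\cH(\fp_{\tilde s}(-t))$ is a polynomial in $t$, so $P\in\rset[t]$, and $P(0)=\det\cH(\tilde s)>0$. To produce the smallest positive zero, suppose $P$ had no positive zero; then $P>0$ on $[0,\infty)$, and since $\cH(\fp_{\tilde s}(0))\succ0$ and the eigenvalues vary continuously, none can cross $0$, so $\cH(\fp_{\tilde s}(-t))\succ0$ for all $t\ge0$. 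A positive definite Hankel matrix is the Hankel matrix of a moment sequence, so $\fp_{\tilde s}(-t)\in\cS_{2d+2}$ for all $t\ge0$, forcing $\fd_{\tilde s}=\infty$ and contradicting the finite bound $\fd_{\tilde s}\le \tfrac{s_2(0)}{2s_0(0)}$ of \Cref{thm:interval} (here $s_0(0)>0$ because $\cH(\tilde s)\succ0$). Hence the smallest positive zero $\delta$ exists, and $\delta>0$ since $P>0$ near $0$. The same continuity argument on $[0,\delta)$ shows $\cH(\fp_{\tilde s}(-t))\succ0$ there, so with $r:=\fp_{\tilde s}(-\delta)$ the limit $\cH(r)=\lim_{t\to\delta^-}\cH(\fp_{\tilde s}(-t))$ is positive semidefinite, while $\det\cH(r)=P(\delta)=0$; thus $\cH(r)$ is positive semidefinite and singular.

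Next I would apply \Cref{lem:hankel1dim} with $d$ replaced by $d+1$, since $r$ is a sequence of order $2(d+1)$. From $\cH(r)\succeq0$ and $\cH(r)\not\succ0$ it yields a $k$-atomic measure $\mu=\sum_{i=1}^k c_i\delta_{x_i}$ with $k\le d+1$ and $c_i>0$ (positivity being automatic for a positive measure), whose atoms lie among the real zeros of $f(x)=v_0+\dots+v_{d+1}x^{d+1}$ for any $v\in\ker\cH(r)\setminus\{0\}$ (note $f\not\equiv0$ as $v\ne0$), and which reproduces $r_\alpha=\int x^\alpha\,\mathrm{d}\mu$ for all $\alpha\le 2d+1$ while $r_{2d+2}=\int x^{2d+2}\,\mathrm{d}\mu+a$ for some $a\ge0$. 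Writing $\hat s_\alpha:=\int x^\alpha\,\mathrm{d}\mu$ for the exact moment sequence of $\mu$, we have $\hat s_\alpha=r_\alpha$ for $\alpha\le 2d+1$, with $\hat s_{2d+2}$ possibly differing from $r_{2d+2}$ by $a$.

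Finally I would transport $\mu$ forward. By \Cref{thm:timeaddition} with $t_i=0$, $p_i=x_i$, and $\Theta_0=\delta_0$, the sequence $\fp_{\hat s}(\delta)$ is represented by the Gaussian mixture $\sum_{i=1}^k c_i\Theta_\delta(x-x_i)$. To identify $\fp_{\hat s}(\delta)$ with $s$ in the orders that matter, I would use the locality in \Cref{lem:moments}(i): the polynomial $\fp_{\hat s,\alpha}$ depends only on the $\hat s_\beta$ with $\beta\le\alpha$, and for $\alpha\le 2d+1$ these equal $r_\beta$, so $\fp_{\hat s,\alpha}=\fp_{r,\alpha}$ for $\alpha\le 2d+1$. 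Combining this with the semigroup identity $\fp_r(\delta)=\fp_{\fp_{\tilde s}(-\delta)}(\delta)=\fp_{\tilde s}(0)=\tilde s$ from \Cref{cor:pProperties}(ii) gives $\fp_{\hat s,\alpha}(\delta)=\tilde s_\alpha=s_\alpha$ for all $\alpha\le 2d+1$, in particular for $\alpha\le 2d+j$. Hence $\sum_{i=1}^k c_i\Theta_\delta(x-x_i)$ represents $s$, which is the assertion.

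I expect the main obstacle to be precisely the reason the statement extends $s$ all the way to order $2d+2$: the truncated moment cone $\cS_{2d+2}$ is \emph{not} closed, so the limit point $r=\fp_{\tilde s}(-\delta)$ reached at the boundary of positive definiteness need not itself be a genuine order-$(2d+2)$ moment sequence, and indeed \Cref{lem:hankel1dim} only guarantees a representing measure after possibly lowering the top moment by $a\ge0$. The way around this is that I never require $r$ to be represented exactly; I only need correctness up to order $2d+j\le 2d+1$, and the defect $a$ is confined to the top moment $s_{2d+2}$, which is discarded. This one extra order of slack is exactly what the hypothesis $\cH(\tilde s)\succ0$ at order $2d+2$ provides, and it is what makes the transport step legitimate despite the lack of closedness.
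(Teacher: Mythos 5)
Your proof is correct and follows essentially the same route as the paper's: flow $\tilde{s}$ backward until the Hankel determinant first vanishes, extract a $k$-atomic measure ($k\leq d+1$) from a kernel vector via \Cref{lem:hankel1dim}, and transport it forward by \Cref{thm:timeaddition}. You are in fact more careful than the paper on two points it glosses over, namely the identification of $\delta$ with $\fd_{\tilde{s}}$ (via the eigenvalue-continuity argument and the upper bound on $\fd_{\tilde{s}}$ from \Cref{thm:interval}) and the possible defect $a\geq 0$ in the top moment $s_{2d+2}$, which you correctly confine to the discarded order.
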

\begin{proof}
From \Cref{lem:hankel1dim} it is easy to see that for $n=1$ a sequence $\tilde{s}=(s_\alpha)_{\alpha=0}^{2d+2}$ is a moment sequence in the interior of the moment cone if and only if $\cH(s)\succ 0$.

By \Cref{thm:interval} we have $\fp_{\tilde{s}}(-\fd_{\tilde{s}})\in\partial\cS_{2d+2}$, i.e., $\cH(\fp_{\tilde{s}}(-\fd_{\tilde{s}}))\succeq 0$ but $\not\succ 0$ and $\cH(\fp_{\tilde{s}}(t))\succ 0$ for all $t>-\fd_{\tilde{s}}$. \ Hence, $\fd_{\tilde{s}}$ is the smallest zero $>0$ of $\det(\cH(\fp_{\tilde{s}}(-t)))$.

By \Cref{lem:hankel1dim} there exists a $k$-atomic representing measure $\nu$ with $k\leq d+1$ of $(\fp_{\tilde{s},\alpha}(-\fd_s))_{\alpha=0}^{2d+1}$ with $\supp\nu\in\cZ(f)$ with $f=v_0 + \dots + v_{d+1} x^{d+1}$ and $(v_0,\dots,v_{d+1})^T\in\ker\cH(\fp_{\tilde{s}}(-\fd_{\tilde{s}}))$.

Then, by \Cref{thm:timeaddition}, $s$ is represented by the Gaussian mixture $\mu$.
\end{proof}

From \Cref{dfn:fp}, we know that each component $\fp_{s,\alpha}(t)\in\rset[t]$ depends only on $s_\beta$ with $\beta \leq \alpha$. \ Hence, in the time evolution $L_{\fp_s(t)}(p_0)$ for $p_0\in\rset[x_1,\dots,x_n]$ we find that there is a $p_t(x)\in\rset[x_1,\dots,x_n,t]$ such that
\[L_{\fp_s(t)}(p_0) = L_s(p_t)\]
for all $t\in\rset$. \ This $p_t$ can be found by rearranging
\[L_{\fp_s(t)}(p_0) = \sum_{\alpha:|\alpha|\leq \deg p_0} c_\alpha(t)\cdot s_\alpha\]
with $c_\alpha\in\rset[t]$. \ Then $p_t$ can be defined uniquely from the $c_\alpha$'s. \ Note that in the following definition and lemma, the polynomial $p_t$ (just like $\fp_s(t)$) is defined for all $t\in\rset$.

\begin{dfn}\label{dfn:pt}
Let $p_0(x) = \sum_{\alpha} c_\alpha(0)\cdot x^\alpha\in\rset[x_1,\dots,x_n]$. \ With $c_\alpha\in\rset[t]$ from
\[L_{\fp_s(t)}(p_0) = \sum_{\alpha:|\alpha|\leq \deg p_0} c_\alpha(t)\cdot s_\alpha\]
we define
\[p_t(x) := \sum_{\alpha:|\alpha|\leq \deg p_0} c_\alpha(t)\cdot x^\alpha\]
for all $t\in\rset$.
\end{dfn}

\begin{lem}\label{lem:heatPoly}
Let $p_0\in\rset[x_1,\dots,x_n]$. \ Then $p_t\in\rset[x_1,\dots,x_n,t]$ is the unique solution of
\begin{align*}
\partial_t p_t(x) &= \Delta p_t(x)
\end{align*}
for all $t\in\rset$ with initial data $p_0$. \ Additionally, we have
\[\deg p_t = \deg p_0\]
for all $t\in\rset$ where $\deg$ is the degree in $x$.
\end{lem}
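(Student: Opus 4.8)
The plan is to reduce the assertion to a statement about the linear evolution $s\mapsto\fp_s(t)$ on sequences and its transpose on polynomial coefficients. Fix $d:=\deg p_0$ and work in the finite-dimensional space spanned by the $x^\alpha$ with $|\alpha|\leq d$; by \Cref{lem:moments} the recursion \eqref{eq:momInduction} only couples $\fp_{s,\alpha}$ to $\fp_{s,\beta}$ with $\beta\leq\alpha$, so this space is invariant. By \Cref{cor:pProperties}(iii) the map $s\mapsto\fp_s(t)$ is linear, so there is a matrix $M(t)=(M_{\alpha\beta}(t))_{|\alpha|,|\beta|\leq d}$ with $\fp_{s,\alpha}(t)=\sum_\beta M_{\alpha\beta}(t)\,s_\beta$. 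From $\fp_s(0)=s$ we get $M(0)=\id$, and \eqref{eq:momInduction} becomes $\partial_t M(t)=D\,M(t)$, where $D$ is the nilpotent matrix with $D_{\alpha\gamma}=\sum_{j}\alpha_j(\alpha_j-1)$ if $\gamma=\alpha-2e_j$ and $0$ otherwise; hence $M(t)=e^{tD}$, a polynomial in $t$. Reading off \Cref{dfn:pt}, the coefficient vector $c(t)=(c_\alpha(t))$ of $p_t$ is obtained from the coefficient vector $c(0)$ of $p_0$ by $c_\beta(t)=\sum_\alpha c_\alpha(0)\,M_{\alpha\beta}(t)$, i.e.\ $c(t)=M(t)^{T}c(0)=e^{tD^{T}}c(0)$, so that $\partial_t c(t)=D^{T}c(t)$.

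The crux is to identify the transpose $D^{T}$ acting on coefficient vectors with the Laplacian acting on polynomials. Indeed, for any $\beta$ the substitution $\alpha=\beta+2e_j$ gives
\[(D^{T}c)_\beta=\sum_\alpha D_{\alpha\beta}\,c_\alpha=\sum_{j=1}^n (\beta_j+1)(\beta_j+2)\,c_{\beta+2e_j},\]
while, writing $q=\sum_\alpha c_\alpha x^\alpha$,
\[\Delta q=\sum_\alpha c_\alpha\sum_{j=1}^n\alpha_j(\alpha_j-1)x^{\alpha-2e_j}=\sum_\beta\Big(\sum_{j=1}^n(\beta_j+1)(\beta_j+2)\,c_{\beta+2e_j}\Big)x^\beta,\]
so the $\beta$-coefficient of $\Delta q$ is exactly $(D^{T}c)_\beta$. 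Thus $\partial_t c(t)=D^{T}c(t)$ translates into $\partial_t p_t=\Delta p_t$, with $c(0)$ being the coefficient vector of $p_0$ supplying the initial condition. This reindexing — the statement that the transpose of the moment recursion \eqref{eq:momInduction} is precisely the Laplacian $\Delta$ — is the only genuine computation, and it is the step I expect to require the most care in the bookkeeping of indices.

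Uniqueness and the degree claim are then routine. For uniqueness, $\Delta$ maps the finite-dimensional space of polynomials of degree $\leq d$ into itself (in fact lowering the degree by two), so the initial value problem $\partial_t P=\Delta P$, $P(0)=p_0$, is a linear ODE with constant coefficients on that space and hence has the unique solution $P(t)=e^{t\Delta}p_0$, which must coincide with $p_t$. For the degree, nilpotency of $\Delta$ gives the finite expansion $p_t=e^{t\Delta}p_0=\sum_{m\geq 0}\tfrac{t^m}{m!}\Delta^m p_0$, in which every summand with $m\geq 1$ has $x$-degree at most $\deg p_0-2$ while the $m=0$ term equals $p_0$. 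Therefore the top homogeneous part of $p_t$ equals that of $p_0$ and is nonzero, giving $\deg p_t=\deg p_0$ for all $t\in\rset$.
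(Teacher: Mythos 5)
Your proof is correct, but it takes a genuinely different route from the paper's. The paper argues by duality with the PDE itself: it tests the defining identity $\int p_t(x)u_0(x)\,\diff x=\int p_0(x)u(x,t)\,\diff x$ against an arbitrary $u_0\in C_c^\infty(\rset^n)$, differentiates at $t=0$, uses $\partial_t u=\Delta u$ and two integrations by parts to move the Laplacian onto $p_0$, and concludes $\partial_t p_t|_{t=0}=\Delta p_0$ from the arbitrariness of $u_0$; the case of general $t$ then follows because $p_0$ was an arbitrary polynomial (the semigroup property of $\fp$). Your argument instead stays entirely on the algebraic side: you write the moment evolution (\ref{eq:momInduction}) as $\partial_t M(t)=DM(t)$ with $D$ nilpotent, observe from \Cref{dfn:pt} that the coefficient vector of $p_t$ evolves by the transpose $e^{tD^T}$, and verify by direct reindexing that $D^T$ acting on coefficients is exactly $\Delta$ acting on polynomials. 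What your approach buys is that it is self-contained (no appeal to the solution $u$ of the heat equation, to density of test functions, or to differentiating under the integral), and it delivers the closed form $p_t=e^{t\Delta}p_0$, from which the uniqueness and the degree statement $\deg p_t=\deg p_0$ fall out immediately via nilpotency of $\Delta$ — two points the paper's proof leaves essentially implicit. What the paper's proof buys is conceptual transparency: it exhibits $p_t$ directly as the object dual to the heat flow, which is the interpretation the surrounding remark (on $\Theta_t*p$) is after. One tiny bookkeeping remark: your uniqueness argument is carried out in the space of polynomials of $x$-degree at most $d=\deg p_0$, so as written it only excludes competing solutions of degree $\le d$; the same finite-dimensional ODE argument applied with any $d'\ge d$ closes this, since any polynomial solution lives in some such invariant subspace.
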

\begin{proof}
Let $u_0\in C_c^\infty(\rset^n)\subset \cS(\rset^n)$ and $u$ be the unique solution of
\begin{align*}
\partial_t u(x,t) &= \Delta u(x,t)\\ u(x,0) &= u_0(x).
\end{align*}
Since $p_t\in\rset[x_1,\dots,x_n]$ for all $t\in\rset$ and $p_0\in\rset[x_1,\dots,x_n]$ is arbitrary, it is sufficient to prove $\partial_t p_t(x) = \Delta p_t(x)$ at $t=0$. \ We have from \Cref{dfn:pt}
\[\int_{\rset^n} p_t(x) \cdot u_0(x)~\diff x = \int_{\rset^n}  p_0(x) \cdot u(x,t)~\diff x.\tag{$*$}\]
By differentiating ($*$) with respect to $t$ we get
\begin{align*}
\partial_t \int_{\rset^n} p_t(x)\cdot u_0(x)~\diff x \bigg|_{t=0}
&= \partial_t \int_{\rset^n} p_0(x)\cdot u(x,t)~\diff x \bigg|_{t=0}\\
&= \int_{\rset^n} p_0(x)\cdot \partial_t u(x,t)~\diff x \bigg|_{t=0}\\
&= \int_{\rset^n} p_0(x)\cdot \Delta u(x,t)~\diff x \bigg|_{t=0}\\
&= \int_{\rset^n} (\Delta p_0(x))\cdot u(x,t)~\diff x \bigg|_{t=0}\\
&= \int_{\rset^n} (\Delta p_0(x))\cdot u_0(x)~\diff x.
\end{align*}
Since $u_0\in C_c^\infty(\rset^n)$ was arbitrary we have
\[\partial_t p_t(x) \big|_{t=0} = \Delta p_0(x)\]
which proves the statement.
\end{proof}

\begin{exm}
For $n=1$ we have
\[s_0(t) = s_0(0),\quad s_1(t) = s_1(0),\quad \text{and}\quad s_2(t) = s_2(0) + 2s_0(0)\cdot t;\]
see (\ref{eq:heatOneDimMoments}). \ For $p_0(x) = c_0(0) + c_1(0)\cdot x + c_2(0)\cdot x^2$ we therefore have
\begin{align*}
L_{s(t)}(p_0)
&= c_0(0)\cdot s_0(t) + c_1(0)\cdot s_1(t) + c_2(0)\cdot s_2(t)\\
&= c_0(0)\cdot s_0(0) + c_1(0)\cdot s_1(0) + c_2(0)\cdot [s_2(0)+ 2s_0(0)\cdot t]\\
&= [c_0(0) + 2c_2(0)\cdot t]\cdot s_0(0) + c_1(0)\cdot s_1(0) + c_2(0)\cdot s_2(0)\\
&= c_0(t)\cdot s_0(0) + c_1(t)\cdot s_1(0) + c_2(t)\cdot s_2(0)\\
&= L_{s}(p_t),
\intertext{i.e.,}
p_t(x) &= [c_0(0) + 2c_2(0)\cdot t] + c_1(0)\cdot x + c_2(0)\cdot x^2,\\
\partial_t p_t(x) &= 2c_2(0),
\intertext{and}
\partial_x^2 p_t(x) &= 2c_2(0)
\end{align*}
for all $t\in\rset$.\exmsymbol
\end{exm}

\begin{rem}
\Cref{lem:heatPoly} can be also interpreted as follows. \ The unique solution of the heat equation is gained by convolution with $\Theta_t$ and we have the well-known relation
\begin{align*}
\int_{\rset^n} g(x)\cdot (\Theta_t*f)(x)~\diff x &= \int_{\rset^n} \int_{\rset^n} g(x) \cdot \Theta_t(x-y)\cdot f(y)~\diff x~\diff y\\
&= \int_{\rset^n} (\Theta_t*g)(y)\cdot f(y)~\diff y.
\end{align*}
For a measure $\mu_0$ we define $\mu_t = \Theta_t*\mu_0$ in the same manner
\begin{subequations}\label{eq:alter}\begin{align}
\int_{\rset^n} f(x)~\diff\mu_t(x) &= \int_{\rset^n} \int_{\rset^n} f(x)\cdot \Theta_t(x-y)~\diff\mu_0(y)
\intertext{which results in}
&= \int_{\rset^n} (\Theta_t*f)(y)~\diff\mu_0(y)
\end{align}\end{subequations}
and $\mu_t$ solves the heat equation. \ Together, (\ref{eq:alter}a) and (\ref{eq:alter}b) can be used to provide an alternative proof of \Cref{lem:heatPoly}. \ This requires, however, a proof that $\Theta_t* p\in\rset[x_1,\dots,x_n,t]$ for all $p\in\rset[x_1,\dots,x_n]$. \ This can be done, but it is nontrivial. \ We believe our proof of \Cref{lem:heatPoly} is conceptually stronger, and yields a polynomial in a straightforward way.  \exmsymbol
\end{rem}

Let $n\in\nset$ and $s\in\cS_\infty$ be a moment sequence. \ The sequence $s$ is called \emph{determinate} if it has exactly one representing measure. \ Otherwise, it is called \emph{indeterminate}. \ We will now see how determinacy and indeterminacy are preserved along the heat curve $\fC_s$.

\begin{thm}\label{thm:indeterminate}
Let $n\in\nset$ and $s\in\cS_\infty$ be an indeterminate moment sequence. \ Then $\fp_s(t)$ is indeterminate for all $t\in [0,\infty)$.
\end{thm}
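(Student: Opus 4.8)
The plan is to realize the heat flow on the level of representing measures via convolution with the Gaussian kernel $\Theta_{\nu t}$, and then to exploit the crucial fact that \emph{Gaussian convolution is injective}: two distinct representing measures of $s$ must produce two distinct representing measures of $\fp_s(t)$, forcing indeterminacy to persist along the heat curve.

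First I would unpack indeterminacy: fix two distinct positive Borel measures $\mu_0\neq\nu_0$, each representing $s$. Since $s_0<\infty$, both are finite measures ($\mu_0(\rset^n)=\nu_0(\rset^n)=s_0$) with all polynomial moments finite. For $t\in[0,\infty)$ I set $\mu_t:=\Theta_{\nu t}*\mu_0$ and $\nu_t:=\Theta_{\nu t}*\nu_0$, with the convention $\Theta_0=\delta_0$ so that $\mu_t,\nu_t$ reduce to $\mu_0,\nu_0$ at $t=0$. Because $\Theta_{\nu t}\in\cS(\rset^n)$, each $\mu_t,\nu_t$ is again a finite positive measure with all moments finite (expanding $(z+y)^\alpha$ and integrating the Gaussian moments against $\mu_0$ resp.\ $\nu_0$ yields a finite linear combination of finite moments).

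Next I would verify that $\mu_t$ and $\nu_t$ both represent $\fp_s(t)$. Using the transfer relation (\ref{eq:alter}), for every $\alpha$ one has $\int x^\alpha\,\diff\mu_t=\int(\Theta_{\nu t}*x^\alpha)\,\diff\mu_0$, and by \Cref{lem:heatPoly} together with \Cref{dfn:pt} the convolution $\Theta_{\nu t}*x^\alpha$ equals the polynomial $p_t$ satisfying $L_{\fp_s(t)}(x^\alpha)=L_s(p_t)$. Hence $\int x^\alpha\,\diff\mu_t=L_s(p_t)=\fp_{s,\alpha}(t)$, and the identical computation for $\nu_0$ gives $\int x^\alpha\,\diff\nu_t=\fp_{s,\alpha}(t)$. (Alternatively, one checks directly that the moments of $\mu_t$ solve the recursive ODE system (\ref{eq:momInduction}) with initial data $s$, so by uniqueness they coincide with $\fp_{s,\alpha}$ from \Cref{lem:moments}; this is exactly the mechanism behind \Cref{thm:timeaddition}.) Thus $\mu_t$ and $\nu_t$ are both representing measures of $\fp_s(t)$.

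The decisive step, which I expect to be the main obstacle, is to show $\mu_t\neq\nu_t$ for every $t\geq 0$: the heat flow must not collapse the two measures together. Here injectivity of Gaussian convolution is the key. Passing to characteristic functions of the finite measures, $\widehat{\mu_t}(\xi)=e^{-\nu t|\xi|^2}\,\widehat{\mu_0}(\xi)$ and likewise for $\nu_t$. By uniqueness of the characteristic function, $\mu_0\neq\nu_0$ gives $\widehat{\mu_0}\neq\widehat{\nu_0}$, so $\widehat{\mu_0}(\xi)\neq\widehat{\nu_0}(\xi)$ for some $\xi\in\rset^n$; since the factor $e^{-\nu t|\xi|^2}$ is strictly positive and never vanishes, $\widehat{\mu_t}(\xi)\neq\widehat{\nu_t}(\xi)$, whence $\mu_t\neq\nu_t$. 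Therefore $\fp_s(t)$ possesses the two distinct representing measures $\mu_t$ and $\nu_t$, so it is indeterminate for all $t\in[0,\infty)$. The only technical care needed throughout is the finiteness of representing measures (guaranteed by $s_0<\infty$), which legitimizes the Fourier-analytic argument.
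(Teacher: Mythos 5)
Your proof is correct, but it takes a genuinely different route from the paper's. You reduce everything to the injectivity of Gaussian convolution on finite measures via Fourier analysis: since $\widehat{\Theta_{\nu t}}(\xi)=e^{-\nu t|\xi|^2}$ never vanishes, $\widehat{\mu_0}\neq\widehat{\tilde\mu_0}$ at some frequency forces $\widehat{\mu_t}\neq\widehat{\tilde\mu_t}$ at that same frequency, and the L\'evy uniqueness theorem for characteristic functions of finite measures (applicable because $s_0<\infty$) does the rest. The paper instead argues in three stages: first it shows $\mu_t\neq\tilde\mu_t$ for $t\in[0,\varepsilon)$ by testing against $\Theta_t*\chi_A$ and using continuity in $t$ at $t=0$; then it uses the Lebesgue decomposition to show that at $t=\varepsilon/2$ both measures have $C^\infty$ densities with respect to Lebesgue measure; finally it invokes backward uniqueness of the heat equation to propagate the inequality to all $t\geq 0$. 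Your argument is shorter and more elementary in that it avoids backward uniqueness (a nontrivial PDE input) and the decomposition/smoothing step entirely, and it handles all $t\geq 0$ in one stroke; the mild technical points you flag (finiteness of the measures, Fubini in the transfer relation $\int x^\alpha\,\diff\mu_t=\int(\Theta_{\nu t}*x^\alpha)\,\diff\mu_0$, and the fact that $\Theta_{\nu t}*x^\alpha$ is the heat polynomial $p_t$ of \Cref{lem:heatPoly}) are all routine and correctly dispatched. What the paper's longer route buys is the intermediate regularity statement recorded separately as \Cref{thm:smoothrepr} --- that $\fp_s(t)$ is represented by a $C^\infty$ density for every $t>0$ --- which falls out of its smoothing step but is not produced by the Fourier argument.
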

\begin{proof}
First we prove that $\fp_s(t)$ is indeterminate for all $t\in [0,\varepsilon)$ for some $\varepsilon>0$:

Since $s$ is indeterminate, and it has at least two distinct representing measures $\mu_0$ and $\tilde{\mu}_0$. \ Since $\mu_0$ and $\tilde{\mu}_0$ are distinct there exists a measurable $A\subset\rset^n$ such that
\[\int_{\rset^n} \chi_A(x)~\diff\mu_0(x) \neq \int_{\rset^n} \chi_A(x)~\diff\tilde{\mu}_0\tag{\#}\]
where $\chi_A$ is the characteristic function of $A$. \ Without loss of generality, $A$ is compact. \ For the time-dependent measures $\mu_t$ and $\tilde{\mu}_t$ we find from (\ref{eq:alter}) that
\begin{align*}
\int_{\rset^n} \chi_A(x)~\diff\mu_t(x) &= \int_{\rset^n} (\Theta_t*\chi_A)~\diff\mu_0(x) \tag{$*$}\\
\int_{\rset^n} \chi_A(x)~\diff\tilde{\mu}_t(x) &= \int_{\rset^n} (\Theta_t*\chi_A)~\diff\tilde{\mu}_0(x)\tag{$**$}
\end{align*}
Both ($*$) and ($**$) depend continuously on $t\geq 0$ and since for $t=0$ we have (\#) there exists an $\varepsilon>0$ such that ($*$) $\neq$ ($**$) for all $t\in [0,\varepsilon)$, i.e., $\mu_t$ and $\tilde{\mu}_t$ are two distinct representing measures of $\fp_s(t)$ and hence $\fp_s(t)$ is indeterminate for all $t\in [0,\varepsilon)$.

Now we show that for $t=\varepsilon/2$ there are $C^\infty$-functions $f_{\varepsilon/2}$ and $\tilde{f}_{\varepsilon/2}$ such that
\[\diff\mu_{\varepsilon/2}(x) = f_{\varepsilon/2}(x)~\diff x \qquad\text{and}\qquad \diff\tilde{\mu}_{\varepsilon/2}(x) = \tilde{f}_{\varepsilon/2}(x)~\diff x.\]
It is sufficient to show this for $\mu_0$:

Since $s_0 = \int_{\rset^n} 1~\diff\mu_0 < \infty$ we have $\mu_0(A)<\infty$ for all Borel-measurable sets $A\in\cB(\rset^n)$. \ Let $\nu := e^{-x^2}\cdot\lambda$, $\lambda$ the Lebesgue measure on $\rset^n$. \ Then $\mu_0$ and $\nu$ are finite measures. \ Hence, by the Lebesgue decomposition \cite[Thm.\ 3.2.3]{bogachevMeasureTheory} there exists a $\nu$-integrable function $g$ such that
\[\mu_0 = g\cdot\nu + \rho \qquad\text{with}\qquad \rho \perp \nu,\]
i.e., $\rho$ is singular with respect to $\nu$ (there exists $A\in\cB(\rset^n)$ with $\nu(A) = 0$ but $\rho(A)>0$).

We show that $\Theta_t*\rho$ for $t>0$ is no longer singular with respect to $\nu$: Let $A\in\cB(\rset^n)$ with $\nu(A) = 0$, i.e., also the Lebesgue measure $\lambda(A) = 0$. \ Then
\[\chi_A (x) := \begin{cases} 1 & \text{for}\ x\in A,\\ 0 & \text{else}\end{cases} \qquad\Rightarrow\qquad \Theta_t* \chi_A = 0\ \text{for all}\ t>0,\]
and therefore
\begin{align*}
(\Theta_t*\rho)(A) &=\!\! \int_{\rset^n} \!\chi_A(x)~\diff(\Theta_t*\rho)(x)
= \!\!\int_{\rset^n} \!\!\int_{\rset^n} \!\chi_A(x)\cdot \Theta_t(x-y)~\!\diff\rho(y)~\!\diff x\\
&= \int_{\rset^n} (\Theta_t*\chi_A)(y)~\diff\rho(y)
= \int_{\rset^n} 0~\diff\rho(y) = 0.
\end{align*}
Hence, for $t=\varepsilon/4$ we have that $\Theta_{\varepsilon/4}*\rho = h\cdot\nu$ for a $\nu$-integrable function $h$. \ In summary for $t=\varepsilon/2$ we have
\begin{align*}
\mu_{\varepsilon/2} &= \Theta_{\varepsilon/2}*\mu_0\\
&= \Theta_{\varepsilon/2}*(g\cdot\nu) + \Theta_{\varepsilon/2}*\rho\\
&= [\Theta_{\varepsilon/2}*(g\cdot e^{-x^2})]\cdot\lambda + \Theta_{\varepsilon/4}*(h\cdot\nu)\\
&= [\Theta_{\varepsilon/2}*(g\cdot e^{-x^2})]\cdot\lambda + [\Theta_{\varepsilon/4}*(h\cdot e^{-x^2})]\cdot\lambda\\
&= [\Theta_{\varepsilon/2}*(g\cdot e^{-x^2}) + \Theta_{\varepsilon/4}*(h\cdot e^{-x^2})]\cdot\lambda\\
&= f_{\varepsilon/2}\cdot\lambda
\end{align*}
with $f_{\varepsilon/2}$ a $C^\infty$-function. \ In the same way we get $\tilde{\mu}_{\varepsilon/2} = \tilde{f}_{\varepsilon/2}~\cdot\lambda$ for a $C^\infty$-function $\tilde{f}_{\varepsilon/2}$. \ We already showed that $\mu_t \neq \tilde{\mu}_t$ for all $t\in [0,\varepsilon)$, i.e., for $t= \varepsilon/2$ we get $f_{\varepsilon/2} \neq \tilde{f}_{\varepsilon/2}$.

Since the heat equation has the backwards uniqueness, see e.g.\ \cite[Ch.\ 2.3]{evans10}, we have
\[\Theta_t* f_{\varepsilon/2} \neq \Theta_t* \tilde{f}_{\varepsilon/2}\]
for all $t\geq 0$, i.e., $\mu_t \neq \tilde{\mu}_t$ for all $t\geq 0$. \ Therefore, $\fp_t(s)$ is an indeterminate moment sequence for all $t\geq 0$.
\end{proof}

\begin{exm}
Let $s := \left( e^{k^2/2} \right)_{k\in\nset_0}$. \ Then for any $c\in [-1,1]$ the sequence $s$ is represented by $\diff\mu_c = f_c~\diff x$ with
\[f_c(x) := [1+ c\cdot\sin(2\pi\ln x)]\cdot \frac{\chi_{(0,\infty)}(x)}{x\cdot \sqrt{2\pi} } e^{-(\ln x)^2/2}, \]
i.e., $s$ is an indeterminate moment sequence \cite{stielt94}. \ By the backwards uniqueness of the heat equation we have
\[\Theta_t* f_c \neq \Theta_t * f_{c'}\]
for all $c,c'\in [-1,1]$ with $c\neq c'$ and $t\geq 0$. \ Hence, $\fp_s(t)$ is indeterminate for all $t\geq 0$.\exmsymbol
\end{exm}

\begin{cor} \label{heatdeterminacy}
Let $n\in\nset$ and $s\in\cS_\infty$ be a determinate moment sequence. \ Then $\fp_s(t)$ is a determinate moment sequence for all $t\in \fI_s\cap (-\infty,0]$.
\end{cor}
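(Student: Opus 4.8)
The plan is to derive this corollary from \Cref{thm:indeterminate} by a contrapositive argument, using the flow (semi-group) property of $\fp$ recorded in \Cref{cor:pProperties}(ii) to reverse the direction of time. The point is that \Cref{thm:indeterminate} propagates \emph{indeterminacy forward} along the heat curve, so determinacy should be inherited \emph{backward}, which is exactly the range $t\in\fI_s\cap(-\infty,0]$ asserted here.

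Concretely, first I would fix $t_0\in\fI_s\cap(-\infty,0]$ and set $s':=\fp_s(t_0)$. By \Cref{dfn:intervall}, the condition $t_0\in\fI_s$ guarantees that $s'\in\cS_\infty$ is a genuine moment sequence, so \Cref{thm:indeterminate} is applicable to $s'$. Arguing by contradiction, I would assume $s'=\fp_s(t_0)$ is indeterminate. Then \Cref{thm:indeterminate} yields that $\fp_{s'}(\tau)$ is indeterminate for every $\tau\in[0,\infty)$.

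The key step is to choose the time $\tau:=-t_0$, which is nonnegative precisely because $t_0\le 0$, and to invoke \Cref{cor:pProperties}(ii):
\[
\fp_{s'}(-t_0)=\fp_{\fp_s(t_0)}(-t_0)=\fp_s(t_0+(-t_0))=\fp_s(0)=s .
\]
Thus $s$ itself would be indeterminate, contradicting the hypothesis that $s$ is determinate. Hence $\fp_s(t_0)$ must be determinate, and since $t_0\in\fI_s\cap(-\infty,0]$ was arbitrary, the claim follows.

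I do not expect a genuine obstacle here: all the analytic difficulty (the Lebesgue-decomposition and backward-uniqueness argument showing indeterminacy persists) is already absorbed into \Cref{thm:indeterminate}. The only points requiring care are bookkeeping ones: verifying that $\fp_s(t_0)$ is a bona fide moment sequence (so that \Cref{thm:indeterminate} even applies), which is exactly what $t_0\in\fI_s$ provides, and checking the sign so that the reversing time $-t_0$ lands in $[0,\infty)$, which is where \Cref{thm:indeterminate} gives information. The conceptual content is simply that indeterminacy is a forward-closed property along $\fp$, hence determinacy is backward-closed.
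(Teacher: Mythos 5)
Your proof is correct and follows essentially the same route as the paper: the paper's own proof is the same contrapositive argument, deducing from \Cref{thm:indeterminate} that indeterminacy of $\fp_s(t_0)$ for $t_0\le 0$ would propagate forward (via the semigroup property of $\fp$) to make $s=\fp_{\fp_s(t_0)}(-t_0)$ indeterminate, a contradiction. You have merely spelled out the bookkeeping (that $t_0\in\fI_s$ makes $\fp_s(t_0)$ a genuine moment sequence, and the explicit use of \Cref{cor:pProperties}(ii)) that the paper leaves implicit.
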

\begin{proof}
Assume $\fp_s(t)$ is an indeterminate moment sequence for some $t<0$. \ Then by \Cref{thm:indeterminate} we also know that $s = \fp_0(s)$ is indeterminate, a contradiction.
\end{proof}

In \Cref{thm:indeterminate} we also proved the following result.

\begin{thm}\label{thm:smoothrepr}
Let $n\in\nset$, $s\in\cS_\infty$, and $\lambda$ be the Lebesgue measure on $\rset^n$. \ Then there exists a family $\{f_t\}_{t>0}$ of $\lambda$-integrable $C^\infty$-functions with
\[f_{t_1 + t_2} = \Theta_{t_1} * f_{t_2} = \Theta_{t_2}* f_{t_1}\]
for all $t_1,t_2>0$ such that $\fp_s(t)$ is represented by $f_t\cdot\lambda$ for all $t>0$, i.e.,
\[\fp_{s,\alpha}(t) = \int_{\rset^n} x^\alpha\cdot f_t(x)~\diff x \qquad\text{for all}\ \alpha\in\nset_0^n\ \text{and}\ t>0.\]
\end{thm}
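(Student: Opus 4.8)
The plan is to take any representing measure of $s$ and smooth it with the heat kernel, reading off both the smooth density and the semigroup relation from facts already in hand. Since $s\in\cS_\infty$, there is a positive measure $\mu_0$ with $\int_{\rset^n} x^\alpha~\diff\mu_0 = s_\alpha$ for all $\alpha$, and $\mu_0$ is finite because $s_0<\infty$. For $t>0$ I set
\[
\mu_t := \Theta_t * \mu_0, \qquad f_t(x) := \int_{\rset^n}\Theta_t(x-y)~\diff\mu_0(y),
\]
so that $\mu_t = f_t\cdot\lambda$ once I know $f_t$ is a function. First I would verify that $\mu_t$ represents $\fp_s(t)$: as the heat-semigroup evolution of $\mu_0$, the measure $\mu_t$ solves the heat equation with initial data $\mu_0$, so its moments obey the recursion (\ref{eq:momInduction}) with initial values $s_\alpha$; by the uniqueness of the solution of that ODE system (\Cref{lem:moments}) they coincide with the polynomials $\fp_{s,\alpha}(t)$. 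Equivalently, this is exactly the adjoint convolution identity (\ref{eq:alter}) together with the moment-matching in \Cref{thm:timeaddition}.

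The key point is that $f_t$ is a $C^\infty$, $\lambda$-integrable function for every $t>0$. This was in fact already carried out inside the proof of \Cref{thm:indeterminate}: writing the Lebesgue decomposition $\mu_0 = g\cdot\nu + \rho$ with $\nu = e^{-x^2}\lambda$ and $\rho\perp\nu$, the argument there shows that $\Theta_{t/2}*\rho$ is absolutely continuous, whence $\mu_t = \Theta_{t/2}*(\Theta_{t/2}*\mu_0)$ has a smooth density. Alternatively, and more directly, for fixed $t>0$ the heat kernel $\Theta_t\in\cS(\rset^n)$ has all its $x$-derivatives bounded, so the finiteness of $\mu_0$ justifies differentiation under the integral sign in the formula for $f_t$ and yields $f_t\in C^\infty$; moreover $f_t\ge 0$ with $\int_{\rset^n} f_t~\diff\lambda = \mu_0(\rset^n)\cdot\int_{\rset^n}\Theta_t~\diff\lambda = s_0<\infty$ by Tonelli, so $f_t\in L^1(\lambda)$. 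Hence $\fp_s(t)$ is represented by $f_t\cdot\lambda$ for all $t>0$.

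The semigroup identity then comes for free from the semigroup property of the heat kernel. For $t_1,t_2>0$,
\[
f_{t_1+t_2}\cdot\lambda = \Theta_{t_1+t_2}*\mu_0 = \Theta_{t_1}*(\Theta_{t_2}*\mu_0) = \Theta_{t_1}*(f_{t_2}\cdot\lambda) = (\Theta_{t_1}*f_{t_2})\cdot\lambda,
\]
using $\Theta_{t_1}*\Theta_{t_2}=\Theta_{t_1+t_2}$ and associativity of convolution; commutativity gives the symmetric expression $(\Theta_{t_2}*f_{t_1})\cdot\lambda$. As all three measures are absolutely continuous with respect to $\lambda$, their densities agree $\lambda$-almost everywhere, and being continuous they agree everywhere, which is the asserted relation $f_{t_1+t_2}=\Theta_{t_1}*f_{t_2}=\Theta_{t_2}*f_{t_1}$.

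I expect no serious obstacle here: the substance, namely turning a finite (possibly singular) measure into a smooth integrable density by heat smoothing, is already proved in \Cref{thm:indeterminate}, and what remains is the bookkeeping of the semigroup relation. The one point deserving care is the justification that $\Theta_t*\mu_0$ is genuinely $C^\infty$ (not merely continuous) and that differentiating under the integral is legitimate; this is precisely where the rapid decay of $\Theta_t$ for fixed $t>0$ and the finiteness of $\mu_0$ are used.
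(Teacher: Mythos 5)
Your proposal is correct and follows essentially the same route as the paper, whose proof simply points back to the heat-kernel smoothing argument already carried out inside the proof of \Cref{thm:indeterminate}. Your direct justification of smoothness (differentiation under the integral sign, using that all $x$-derivatives of $\Theta_t$ are bounded for fixed $t>0$ and that $\mu_0$ is finite) is if anything cleaner than the paper's Lebesgue-decomposition detour, and your explicit verification of the semigroup identity and of the moment matching via (\ref{eq:alter}) and \Cref{thm:timeaddition} fills in details the paper leaves implicit.
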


\begin{cor} \label{cor227}
Let $n\in\nset$, $s\in\cS_\infty$ be a moment sequence and $\lambda$ be the Lebesgue measure on $\rset^n$. \ If $\fd_s > 0$ and $s$ has a representing measure $\mu$ not of the form $f\cdot\lambda$ with a $C^\infty$-function $f$, then $s$ is indeterminate.
\end{cor}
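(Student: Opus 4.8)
The plan is to prove indeterminacy directly, by exhibiting a \emph{second} representing measure for $s$ that is visibly different from the given $\mu$. The whole point is that the hypothesis $\fd_s>0$ leaves room to run the heat flow backwards by a strictly positive amount and then forwards again, and the forward flow — by \Cref{thm:smoothrepr} — is guaranteed to produce a smooth density. Since $\mu$ is by assumption \emph{not} a smooth density, the two measures cannot coincide, and that is exactly what indeterminacy means.

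First I would use $\fd_s>0$ together with \Cref{thm:interval} (which gives $\fI_s=[-\fd_s,\infty)$ in the case $d=\infty$) to conclude that $-\fd_s\in\fI_s$, so that $s':=\fp_s(-\fd_s)$ is again a moment sequence in $\cS_\infty$. The group property \Cref{cor:pProperties}(ii) then recovers $s$ from $s'$ along the heat curve: $\fp_{s'}(\fd_s)=\fp_s(-\fd_s+\fd_s)=\fp_s(0)=s$.

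Next I would apply \Cref{thm:smoothrepr} to the sequence $s'$. Because $\fd_s>0$ is a strictly positive flow time, that theorem furnishes a $\lambda$-integrable $C^\infty$-function $g$ such that $\fp_{s'}(\fd_s)=s$ is represented by $g\cdot\lambda$. Thus $s$ has a representing measure of the form (smooth density)$\times$(Lebesgue measure). By hypothesis $\mu$ is \emph{not} of this form, so $\mu\neq g\cdot\lambda$; these are two distinct representing measures of $s$, and hence $s$ is indeterminate.

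There is no serious obstacle once \Cref{thm:smoothrepr} and the group law are in hand; the only delicate point — and the reason the statement requires $\fd_s>0$ rather than merely $\fd_s\geq 0$ — is that one needs a \emph{strictly} positive flow time to invoke the smoothing in \Cref{thm:smoothrepr}. Were $\fd_s=0$, we could not step back along $\fI_s$ and then forward by a positive amount, and the construction would simply return $\mu$ itself, yielding no second measure.
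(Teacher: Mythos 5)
Your proposal is correct and follows essentially the same route as the paper: use $\fd_s>0$ to step back along the heat curve to a moment sequence (the paper uses $t=-\fd_s/2$, you use the endpoint $-\fd_s$, which \Cref{thm:interval} also permits for $d=\infty$), then invoke \Cref{thm:smoothrepr} with a strictly positive forward flow time to obtain a smooth-density representation of $s$ distinct from $\mu$. No gaps; the only difference from the paper is the cosmetic choice of how far back to step.
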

\begin{proof}
Since $\fd_s>0$ we have that $\fp_t(s)\in\cS_\infty$ is a moment sequence for $t = -\fd_s/2$. \ By \Cref{thm:smoothrepr} $s$ is represented by $f\cdot\lambda$ for a $C^\infty$-function and by assumption also by $\mu$ not of this form, i.e., $s$ has two distinct representing measures and is therefore indeterminate.
\end{proof}

For $n=1$ it can be shown that $s$ is determinate if and only if the multiplication operator $M_x$ defined by $M_x p(x) = x\cdot p(x)$ is essentially self-adjoint on $\cset[x]$ in $L^2(\rset,\mu)$. \ In higher dimensions ($n\geq 2$), both statements are no longer equivalent and several determinate definitions appear, see e.g.\ \cite[Ch.\ 14]{schmudMomentBook}.

\begin{dfn}
Let $n\in\nset$, $s\in\cS_\infty$, and $\mu$ be a representing measure of $s$. \ The moment sequence $s$ is called
\begin{enumerate}[(a)]
\item \emph{determinate} if the representing measure $\mu$ of $s$ is unique;

\item \emph{strictly determinate} if $s$ is determinate and $\cset[x_1,\dots,x_n]$ is dense in $L^2(\rset^n,\diff\mu)$;

\item \emph{strongly determinate} if $s$ is determinate and $\cset[x_1,\dots,x_n]$ is dense in $L^2(\rset^n,(1+x_i^2)~\diff\mu)$ for all $i=1,\dots,n$;

\item \emph{ultradeterminate} if $s$ is determinate and $\cset[x_1,\dots,x_n]$ is dense in $L^2(\rset^n,(1+x_1^2+\dots + x_n^2)~\diff\mu)$.
\end{enumerate}
\end{dfn}

We have the proper inclusions
\!\!\!\!\!\[\text{ultradeterminate}\ \!\!\subsetneq\ \!\!\text{strongly \!\!\! determinate}\ \!\!\subsetneq\ \!\!\text{strictly \!\!\! determinate}\ \!\!\subsetneq\ \!\!\text{determinate}.\]
It remains open how these properties are preserved along the heat curves $\fC_s$.

%%%%%%%%%%%%%%%%%%%%%%%%
%%%%%%%%%%%%%%%%%%%%%%%%
%%%%%%%%%%%%%%%%%%%%%%%%
\section{Time-dependent moments from the transport equation $\partial_t u = ax \cdot \nabla u$} \label{sec3}
%\addcontentsline{toc}{section}{Time-dependent moments from the transport equation \newline $\partial_t u = ax \cdot \nabla u$}%%%
%%%%%%%%%%%%%%%%%%%%%%%%%%%%%%%%%%%%%%%%%%%%%%%%%%%%%%%%%%%%%%%%%%%%%%%%%%%%%%%%%%%%%%%%%%%

Let $a=(a_1,\dots,a_n)^T\in\rset^n$ and $ax\cdot \nabla := a_1x_1\partial_1 + \dots + a_n x_n\partial_n$. \ The transport equation
\begin{equation}\label{eq:trans}\begin{split}
\partial_t u(x,t) &= ax\cdot\nabla u(x,t)\\
u(x,0) &= u_0(x)
\end{split}\end{equation}
has the solution
\begin{equation}\label{eq:transSol}
u(x,t) = u_0(x_1 e^{a_1 t},\ldots, x_n e^{a_n t})
\end{equation}
with the moments
\begin{equation}\label{eq:transMom}
s_\alpha(t) = s_\alpha(0) \exp\left( -\sum_{i=1}^n a_i (\alpha_i +1) t \right).
\end{equation}
The moments (\ref{eq:transMom}) follow immediately from (\ref{eq:transSol}) by the transformation
\begin{equation}\label{eq:transtrans}\begin{split}
\int f(x)~\diff\mu_t(x) &= \int f(x)~\diff\mu_0(x_1 e^{a_1 t},\ldots,x_n e^{a_n t})\\
&= \int f(y_1 e^{-a_1 t},\ldots, y_n e^{-a_n t}) e^{-(a_1+\dots+a_n) t}~\diff\mu_0(y).
\end{split}\end{equation}
All results in this section are a consequence of (\ref{eq:transtrans}).

In a way entirely similar to the heat equation case, and recalling the associated polynomial $\fp$ (\Cref{dfn:fp}), we see that (\ref{eq:transMom}) can be used to define $s_\alpha(t)$ from $s_\alpha(0)$, without necessarily restricting attention to moment sequences.

\begin{dfn}\label{dfn:ft}
Let $n\in\nset$, $d\in\nset\cup\{\infty\}$, $a_1,\dots,a_n\in\rset$, and let $s=(s_\alpha(0))_{\alpha:|\alpha|\leq d}$ be any sequence. \ For all $t\in\rset$ we define $\ft_s(t) \equiv (\ft_{s,\alpha}(t))_{\alpha:|\alpha|\leq d}$ by
\begin{equation} \label{eq:trans2Mom} \begin{split}
\ft_{s,\alpha}(t) := s_\alpha(0) \exp\left( -\sum_{i=1}^n a_i (\alpha_i +1) t \right). 
\end{split}\end{equation}
\end{dfn}

The $1$--parameter family of sequences $\ft$ fulfills the following relations.

\begin{lem}\label{lem:ftProp}
For all $a,b,t_1,t_2\in\rset$ and sequences $s$ and $s'$ we have
\begin{enumerate}[(i)]
\item $\ft_{as + bs'}(t) = a\cdot\ft_s(t) + b\cdot\ft_{s'}(t)$ and

\item $\ft_{\ft_s(t_1)}(t_2) = \ft_s(t_1 + t_2)$.
\end{enumerate}
\end{lem}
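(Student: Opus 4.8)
The plan is to observe that, by \Cref{dfn:ft}, the assignment $s\mapsto\ft_s(t)$ acts componentwise as multiplication of $s_\alpha(0)$ by the scalar factor
\[
E_\alpha(t) := \exp\left(-\sum_{i=1}^n a_i(\alpha_i+1)t\right),
\]
which depends on $\alpha$ and $t$ (and the fixed $a_i$) but \emph{not} on the sequence $s$. Both assertions then reduce to two elementary properties of this factor: it is a sequence-independent scalar, and—because the exponent is linear in $t$—it is multiplicative in time, i.e.\ $E_\alpha(t_1)E_\alpha(t_2)=E_\alpha(t_1+t_2)$ for all $t_1,t_2\in\rset$. I would state this law-of-exponents identity first, since it is the only nontrivial input and it drives part (ii).

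For (i), I would argue componentwise. The $\alpha$-entry of $as+bs'$ is $a\,s_\alpha(0)+b\,s'_\alpha(0)$, so multiplying by $E_\alpha(t)$ and distributing gives
\[
\ft_{as+bs',\alpha}(t) = \bigl(a\,s_\alpha(0)+b\,s'_\alpha(0)\bigr)E_\alpha(t)
= a\,\ft_{s,\alpha}(t) + b\,\ft_{s',\alpha}(t),
\]
which is exactly the claim since $\alpha$ was arbitrary. This is immediate from the definition and requires no further work.

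For (ii), the main point is to keep careful track of what plays the role of the ``initial value $s_\alpha(0)$'' when the input sequence is itself $\ft_s(t_1)$. Its $\alpha$-entry is $\ft_{s,\alpha}(t_1)=s_\alpha(0)E_\alpha(t_1)$, and applying \Cref{dfn:ft} with time $t_2$ to this sequence yields
\[
\ft_{\ft_s(t_1),\alpha}(t_2) = \ft_{s,\alpha}(t_1)\cdot E_\alpha(t_2)
= s_\alpha(0)\,E_\alpha(t_1)E_\alpha(t_2)
= s_\alpha(0)\,E_\alpha(t_1+t_2) = \ft_{s,\alpha}(t_1+t_2),
\]
where the third equality is the multiplicativity noted above. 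Since this holds for every $\alpha$ with $|\alpha|\leq d$, part (ii) follows.

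I do not expect any genuine obstacle here; the result is the transport-equation analogue of \Cref{cor:pProperties}(ii)--(iii) for the heat flow, with the semigroup property of the heat kernel replaced by the law of exponents. The only thing demanding minor care is the bookkeeping in (ii)—correctly reading off the $\alpha$-component of the intermediate sequence $\ft_s(t_1)$ as the new ``$s_\alpha(0)$''—and noting that, unlike the heat case, the identities here hold for \emph{all} $t_1,t_2\in\rset$ (not merely $t\geq 0$), because the exponential factor is defined and invertible for every real time.
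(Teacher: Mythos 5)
Your proposal is correct and follows the same route as the paper, which simply invokes the componentwise linearity of \Cref{dfn:ft} for (i) and the addition law $e^{a+b}=e^a e^b$ for (ii); you have merely written out the bookkeeping explicitly. No issues.
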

\begin{proof}
Both statements follow directly from \Cref{dfn:ft}. \ (i) is a consequence of the linearity in $s$; (ii) is straightforward, after we recall the $\exp$-addition theorem $e^{a+b} = e^a e^b$.
\end{proof}

For a moment sequence (truncated or not) which is represented by an atomic measure, we get the following result.

\begin{thm}\label{thm:transMomAtomic}
Let $n\in\nset$, $d\in\nset\cup\{\infty\}$, and $s\in\cS_d$. \ If $s$ is represented by
\[\mu_0 = \sum_{i=1}^k c_i(0)\cdot \delta_{x_i(0)}\]
with $x_i(0)=(x_{i,1}(0),\ldots,x_{i,n}(0))\in\rset^n$, then $\ft_s(t)$ is represented by
\[\mu_t = \sum_{i=1}^k c_i(t)\cdot \delta_{x_i(t)}\]
with
\[c_i(t) = c_i(0) \exp\left(-\sum_{j=1}^n a_j t \right)\]
and
\[x_{i,j}(t) = x_{i,j}(0) e^{-a_j t} \tag{$j=1,\dots,n$}\]
for all $t\in\rset$.
\end{thm}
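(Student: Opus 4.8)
The plan is to verify the asserted representation by a direct moment computation: I compute $\int x^\alpha\,\diff\mu_t$ for the proposed measure $\mu_t=\sum_{i=1}^k c_i(t)\,\delta_{x_i(t)}$ and check that it equals $\ft_{s,\alpha}(t)$ as given in \Cref{dfn:ft}. Since ``$\mu_t$ represents $\ft_s(t)$'' means precisely that the two $\alpha$-moments agree for every $\alpha$ with $|\alpha|\le d$, this verification is all that is required; no appeal to uniqueness of the representation nor to solving the PDE is needed, and the argument is insensitive to whether $d$ is finite or $d=\infty$.

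First I would substitute the definitions of $c_i(t)$ and $x_{i,j}(t)$. For a single atom,
\[
c_i(t)\, x_i(t)^\alpha
= c_i(0)\exp\Bigl(-\sum_{j=1}^n a_j t\Bigr)\prod_{j=1}^n \bigl(x_{i,j}(0)e^{-a_j t}\bigr)^{\alpha_j}
= c_i(0)\, x_i(0)^\alpha \exp\Bigl(-\sum_{j=1}^n a_j(\alpha_j+1)t\Bigr),
\]
where the factor $e^{-\sum_j a_j t}$ coming from the rescaled weight and the factor $e^{-\sum_j a_j\alpha_j t}$ coming from the rescaled support combine into the single exponential of (\ref{eq:trans2Mom}). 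Summing over $i$ and using that $\mu_0$ represents $s$, that is $\sum_{i=1}^k c_i(0)\,x_i(0)^\alpha = s_\alpha(0)$, gives
\[
\int_{\rset^n} x^\alpha \,\diff\mu_t(x) = s_\alpha(0)\exp\Bigl(-\sum_{j=1}^n a_j(\alpha_j+1)t\Bigr) = \ft_{s,\alpha}(t),
\]
which is exactly the claim.

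Conceptually, the same identity can be read directly off the transformation rule (\ref{eq:transtrans}): the transport flow is the coordinate rescaling $T_t\colon y\mapsto (y_1 e^{-a_1 t},\dots,y_n e^{-a_n t})$ together with the uniform mass factor $e^{-\sum_j a_j t}$, so that $\mu_t$ is simply $e^{-\sum_j a_j t}$ times the pushforward $(T_t)_*\mu_0$. Applying $T_t$ to the atom $\delta_{x_i(0)}$ moves it to $\delta_{x_i(t)}$ and scales its mass to $c_i(t)$, reproducing the stated formula for $\mu_t$. I would present the explicit computation above as the proof, since it is self-contained, and offer the pushforward picture only as motivation.

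There is no genuine obstacle here; the one point to get right is the exponent bookkeeping, namely that the ``$+1$'' in the factor $(\alpha_j+1)$ of (\ref{eq:trans2Mom}) is accounted for entirely by the uniform rescaling of the weights $c_i$, whereas the $\alpha_j$ part comes from rescaling the support points. Once this matching is observed the proof is immediate, and positivity of the representation is automatic, since $e^{-\sum_j a_j t}>0$ keeps every weight $c_i(t)$ of the same sign as $c_i(0)$.
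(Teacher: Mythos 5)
Your proof is correct and matches the paper's approach: the paper's entire proof is ``Straightforward from (\ref{eq:transtrans})'', and your explicit moment computation (equivalently, the pushforward picture in your second paragraph) is exactly the straightforward verification being alluded to. The exponent bookkeeping, with the $+1$ in $(\alpha_j+1)$ coming from the weight rescaling and the $\alpha_j$ part from the support rescaling, is handled correctly.
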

\begin{proof}
Straightforward from (\ref{eq:transtrans}).
\end{proof}

In a way entirely analogous to the case of the heat curve $\fC_s$ of a moment sequence $s$ (\Cref{dfn:heatCurve}), we define $\fI_s$ for the transport equation, and we also define the transport curve $\fT_s$.

\begin{dfn}
Let $n\in\nset$, $d\in\nset\cup\{\infty\}$, and $s\in\cS_d$. \ Let
\[\fI_s := \{t\in\rset \,|\, \ft_s(t)\in\cS_d\}\]
and define the \emph{transport curve} by
\[\fT_s := \ft_s(\fI_s)\subset\cS_d.\] 
\end{dfn}

From (\ref{eq:transtrans}) and \Cref{thm:transMomAtomic} we get the following results.

\begin{cor}
Let $n\in\nset$, $d\in\nset\cup\{\infty\}$, and $s = (s_\alpha)_{\alpha:|\alpha|\leq d}$ be a sequence. \ The following are equivalent:
\begin{enumerate}[(i)]
\item $s$ is a moment sequence.

\item $\ft_s(t)$ is a moment sequence for some $t\in\rset$.

\item $\ft_s(t)$ is a moment sequence for all $t\in\rset$.
\end{enumerate}
I.e., we have for the transport equation (\ref{eq:trans}) and $s\in\cS_d$
\[\fI_s = \rset \qquad\text{and}\qquad \fT_s = \ft_s(\rset).\]
\end{cor}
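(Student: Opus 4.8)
The corollary asserts the equivalence of three conditions for a sequence $s=(s_\alpha)_{\alpha:|\alpha|\le d}$: that $s$ is a moment sequence, that $\ft_s(t)$ is a moment sequence for some $t\in\rset$, and that $\ft_s(t)$ is a moment sequence for all $t\in\rset$. The punchline is then $\fI_s = \rset$ and $\fT_s = \ft_s(\rset)$. The whole engine here is the explicit change-of-variables formula (\ref{eq:transtrans}): unlike the heat equation, the transport flow is a pure (diagonal, invertible) rescaling of $\rset^n$, so it should carry measures bijectively to measures and never push a moment sequence out of the cone.

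**The plan.** The cleanest route is to prove the cycle of implications $(iii)\Rightarrow(ii)\Rightarrow(i)\Rightarrow(iii)$. The implications $(iii)\Rightarrow(ii)$ is trivial (take $t=0$, noting $\ft_s(0)=s$ by \Cref{dfn:ft}), and $(ii)\Rightarrow(i)$ is likewise immediate since $\ft_s(0)=s$ is recovered by the group property $\ft_{\ft_s(t)}(-t)=\ft_s(0)=s$ from \Cref{lem:ftProp}(ii), provided I can first push the single known moment sequence $\ft_s(t)$ back to $t=0$; so really the substance is contained in $(i)\Rightarrow(iii)$. The plan is therefore: assume $s$ is a moment sequence with representing measure $\mu_0$, and for each fixed $t\in\rset$ exhibit an explicit representing measure $\mu_t$ for $\ft_s(t)$. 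First I would define the diffeomorphism $\Phi_t(x)=(x_1 e^{-a_1 t},\ldots,x_n e^{-a_n t})$ of $\rset^n$ and set $\mu_t$ to be the pushforward of $\mu_0$ under $\Phi_t$ weighted by the Jacobian factor $e^{-(a_1+\dots+a_n)t}$, exactly as dictated by the second line of (\ref{eq:transtrans}). Then I would verify by direct substitution of $f(x)=x^\alpha$ into (\ref{eq:transtrans}) that $\int x^\alpha\,\diff\mu_t = s_\alpha(0)\exp(-\sum_i a_i(\alpha_i+1)t) = \ft_{s,\alpha}(t)$, which is precisely the defining formula (\ref{eq:trans2Mom}). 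This shows $\ft_s(t)\in\cS_d$ for every $t$, giving $(iii)$ and, since $t$ was arbitrary, $\fI_s=\rset$.

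**Handling the two regimes $d=\infty$ and $d$ finite.** For $d=\infty$ the pushforward $\mu_t$ is an honest positive measure representing the full sequence $\ft_s(t)$ and nothing further is needed. For finite $d$ the subtlety is only notational: a truncated sequence lies in $\cS_d$ iff it extends to a moment sequence, and the same pushforward measure $\mu_t$ represents the truncation, so the argument is unchanged. I would remark that the atomic case is already covered verbatim by \Cref{thm:transMomAtomic} (which computes the moved atoms $x_{i,j}(t)=x_{i,j}(0)e^{-a_j t}$ and weights $c_i(t)=c_i(0)e^{-\sum_j a_j t}$), so for readers who prefer the atomic picture the finite-$d$ case reduces, via Richter's theorem, to moving finitely many Dirac masses. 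Finally $\fT_s=\ft_s(\fI_s)=\ft_s(\rset)$ is then immediate from the definition of $\fT_s$ together with $\fI_s=\rset$.

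**Main obstacle.** Honestly there is no deep obstacle: the transport flow is invertible (its inverse is just $t\mapsto -t$ via \Cref{lem:ftProp}(ii)), so positivity is never lost in either time direction, in sharp contrast to the heat equation where negative time can eject a sequence from the cone. The only point requiring genuine care is the bookkeeping of the Jacobian factor $e^{-(a_1+\dots+a_n)t}$ in (\ref{eq:transtrans}) and checking that it combines with the exponents coming from $x^\alpha$ to reproduce exactly the $(\alpha_i+1)$ in the exponent of (\ref{eq:trans2Mom}); getting this constant right is the one place an error could slip in, and I would double-check it by testing $\alpha=0$, where the formula must return $s_0(0)e^{-\sum_i a_i t}$, consistent with the total mass scaling of $\mu_t$.
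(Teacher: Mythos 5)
Your proposal is correct and follows essentially the same route as the paper: the implications $(iii)\Rightarrow(ii)\Rightarrow(i)$ are dispatched as immediate, and the substance $(i)\Rightarrow(iii)$ is proved by using the change of variables (\ref{eq:transtrans}) to exhibit the pushed-forward measure $\mu_t$ as an explicit nonnegative representing measure of $\ft_s(t)$ for every $t\in\rset$, whence $\fI_s=\rset$ and $\fT_s=\ft_s(\rset)$. Your extra remarks (the Jacobian bookkeeping check at $\alpha=0$ and the optional reduction to the atomic case via Richter for finite $d$) are consistent with, but not needed beyond, the paper's argument.
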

\begin{proof}
(iii) $\Rightarrow$ (ii) $\Rightarrow$ (i) is clear. \ It is therefore sufficient to prove (i) $\Rightarrow$ (iii).

(i) $\Rightarrow$ (iii): Let $s\in\cS_d$, $\mu$ be a representing measure of $s$, and $t\in\rset$. \ By (\ref{eq:transtrans}) we have that $\mu_t(x) := \mu(x_1 e^{a_1 t},\ldots,x_n e^{a_n t})$ is a representing measure of $\ft_s(t)$. \ Hence, $\ft_s(t)$ has a non-negative representing measure, and is therefore a moment sequence $\ft_s(t)\in\cS_d$ for all $t\in\rset$. \ Hence, $\fI_s = \rset$ and $\fT_s = \ft_s(\fI_s) = \ft_s(\rset)$.
\end{proof}

\begin{cor}
Let $n\in\nset$, $d\in\nset$ finite, and $s\in\cS_d$. \ Then the Carath\'eodory number is constant on $\fT_s$, i.e., all moment sequences $s'\in \fT_s$ have the same Carath\'eodory number.
\end{cor}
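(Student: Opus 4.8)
The plan is to show that the transport flow $t \mapsto \ft_s(t)$ acts on representing measures through an \emph{invertible} transformation that preserves the number of atoms, so that minimal atomic representations are carried to minimal atomic representations in both directions; constancy of the Carath\'eodory number is then immediate.

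First I would record that, since $d$ is finite, Richter's Theorem guarantees that every $s'\in\cS_d$ admits a finitely atomic representing measure, so its Carath\'eodory number $\mathcal{C}(s')$ — the least $k$ for which $s'$ has a $k$-atomic representing measure — is well defined. Fix $t\in\rset$ and set $s':=\ft_s(t)$. By the group property \Cref{lem:ftProp}(ii) together with $\ft_s(0)=s$ we have $\ft_{s'}(-t)=s$, so $s$ and $s'$ play symmetric roles under the flow and its inverse.

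The heart of the argument is \Cref{thm:transMomAtomic}. If $\mu_0=\sum_{i=1}^k c_i\,\delta_{x_i}$ is a $k$-atomic representing measure of $s$, then
\[
\mu_t = \sum_{i=1}^k c_i(t)\,\delta_{x_i(t)}, \qquad c_i(t) = c_i\,e^{-\sum_j a_j t}, \quad x_{i,j}(t) = x_{i,j}\,e^{-a_j t},
\]
represents $s'=\ft_s(t)$. The position map $T_t:x\mapsto (x_1 e^{-a_1 t},\dots,x_n e^{-a_n t})$ is a diagonal linear bijection of $\rset^n$ (its determinant equals $e^{-\sum_j a_j t}\neq 0$), so distinct atoms $x_i$ are sent to distinct atoms $T_t(x_i)$; and the weights $c_i(t)$ remain strictly positive because $e^{-\sum_j a_j t}>0$. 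Hence $\mu_t$ is genuinely $k$-atomic, which gives $\mathcal{C}(s')\le\mathcal{C}(s)$. Applying the identical reasoning to the inverse flow, using $s=\ft_{s'}(-t)$ and the bijection $T_{-t}=T_t^{-1}$, yields $\mathcal{C}(s)\le\mathcal{C}(s')$, whence $\mathcal{C}(\ft_s(t))=\mathcal{C}(s)$ for every $t$. Since every point of $\fT_s=\ft_s(\rset)$ is of this form, the Carath\'eodory number is constant along $\fT_s$.

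The computation itself is routine; the only point requiring care is that transporting the \emph{minimal} atomic measure of $s$ does not a priori produce the minimal one for $s'$. This is exactly why the invertibility of $T_t$ — equivalently, the availability of the inverse flow $\ft_{\,\cdot\,}(-t)$ — is essential: it lets one pull back any atomic representation of $s'$ to one of $s$ with the same number of atoms, thereby securing the reverse inequality. So the main (and rather mild) obstacle is to set up the two-sided comparison correctly rather than to overcome any analytic difficulty.
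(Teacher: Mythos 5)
Your proposal is correct and follows essentially the same route as the paper: transport a minimal atomic representing measure forward via \Cref{thm:transMomAtomic} to get $\cat(\ft_s(t))\le\cat(s)$, then use the inverse flow $\ft_{\cdot}(-t)$ for the reverse inequality. The extra details you supply (distinctness of atoms under the diagonal bijection, strict positivity of the transported weights, well-definedness via Richter's theorem) are points the paper leaves implicit but are consistent with its argument.
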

\begin{proof}
Denote by $\cat(s)$ the Carath\'eodory number of $s$ and by $\cat(s')$ the Carath\'eodory number of $s' := \ft_s(t)\in\fT_s$ for some $t\in\rset$. \ Let
\[\mu_0 = \sum_{i=1}^{\cat(s)} c_i\cdot\delta_{x_i} \qquad\text{and}\qquad \mu'_0 = \sum_{i=1}^{\cat(s')} c_i'\cdot\delta_{x_i'}\]
be finitely atomic representing measures for $s$ and $s'$, where the minimal number of atoms (i.e., Carath\'eodory number) is attained. \ Let
\[\mu_t = \sum_{i=1}^{\cat(s)} c_i(t)\cdot\delta_{x_i(t)} \qquad\text{and}\qquad \mu_{-t}' = \sum_{i=1}^{\cat(s')} c_i'(-t)\cdot\delta_{x_i'(-t)}\]
be the time-evolved finitely atomic measures obtained from \Cref{thm:transMomAtomic}. \ Then $\mu_t$ is a representing measure of $s'$, i.e., $\cat(s')\leq \cat(s)$, and $\mu_{-t}'$ is a representing measure of $s$, i.e., $\cat(s)\leq \cat(s')$. \ In summary, we have $\cat(s) = \cat(s')$ and since $s'\in\fT_s$ was arbitrary equality holds for all $s'\in\fT_s$.
\end{proof}

\begin{cor} \label{transportdeterminacy}
Let $n\in\nset$ and $s\in\cS_\infty$.
\emph{(Determinacy (resp. strict determinacy, strong determinacy, ultradeterminacy)} \ The following statements are equivalent.
\begin{enumerate}[{(}i)]
\item $s$ is determinate (resp. strictly determinate, strongly determinate, ultradeterminate)
\item There exists a $s'\in\fT_s$ which is determinate (resp. strictly determinate, strongly determinate, ultradeterminate).
\item All $s'\in\fT_s$ are determinate (resp. strictly determinate, strongly determinate, ultradeterminate).
\end{enumerate}
That is, determinacy, strict determinacy, strong determinacy, and ultradeterminacy are preserved on the transport curves $\fT_s$.
\end{cor}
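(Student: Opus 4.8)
The plan is to reduce everything to a single change-of-variables correspondence between representing measures, and then check that this correspondence respects each of the four density conditions. Fix $t\in\rset$ and write $\Lambda_t(y):=(y_1 e^{-a_1 t},\dots,y_n e^{-a_n t})$, a linear diagonal diffeomorphism of $\rset^n$ with inverse $\Lambda_{-t}$. Reading (\ref{eq:transtrans}) as $\int f~\diff\mu_t = e^{-(a_1+\dots+a_n)t}\int (f\circ\Lambda_t)~\diff\mu_0$, I see that $\mu\mapsto\mu_t := (\Lambda_t)_*\bigl(e^{-(a_1+\dots+a_n)t}\mu\bigr)$ is a bijection, with inverse the same construction for $-t$, from the set of representing measures of $s$ onto the set of representing measures of $\ft_s(t)$. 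Nonnegativity is preserved because $\Lambda_t$ is a diffeomorphism and the scalar factor is positive, and that $\mu_t$ represents $\ft_s(t)$ is exactly the content of (\ref{eq:transtrans}). Plain determinacy is then immediate: $s$ has a unique representing measure iff $\ft_s(t)$ does. Since $s=\ft_s(0)\in\fT_s$, and $s=\ft_{s'}(-t)$ whenever $s'=\ft_s(t)$ by \Cref{lem:ftProp}(ii), the three-way equivalence follows at once for the plain ``determinate'' case.

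For the three density-based notions I would introduce the composition operator $V_t g:=g\circ\Lambda_t$ and verify, from the displayed identity, that $\|V_t g\|_{L^2(\mu_0)}^2 = e^{(a_1+\dots+a_n)t}\,\|g\|_{L^2(\mu_t)}^2$. Hence $V_t:L^2(\rset^n,\diff\mu_t)\to L^2(\rset^n,\diff\mu_0)$ is a bounded linear isomorphism with bounded inverse $h\mapsto h\circ\Lambda_{-t}$. The decisive point is that $V_t$ maps $\cset[x_1,\dots,x_n]$ bijectively onto itself: on a monomial, $V_t(x^\alpha)=e^{-(a_1\alpha_1+\dots+a_n\alpha_n)t}\,y^\alpha$, a nonzero scalar multiple of $y^\alpha$. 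Consequently $\cset[x_1,\dots,x_n]$ is dense in $L^2(\mu_t)$ if and only if its image (again $\cset[x_1,\dots,x_n]$) is dense in $L^2(\mu_0)$, which together with the preservation of plain determinacy yields strict determinacy along $\fT_s$.

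For strong and ultradeterminacy the only extra issue is the polynomial weight, and here I would compare weights rather than transform them exactly. For the weight $1+x_i^2$ the same substitution gives $\int |g|^2(1+x_i^2)~\diff\mu_t = e^{-(a_1+\dots+a_n)t}\int |V_t g|^2\,(1+e^{-2a_i t}y_i^2)~\diff\mu_0$, and the two weights are comparable, $\min(1,e^{-2a_i t})\,(1+y_i^2)\le 1+e^{-2a_i t}y_i^2\le\max(1,e^{-2a_i t})\,(1+y_i^2)$. Thus $(1+e^{-2a_i t}y_i^2)\,\diff\mu_0$ and $(1+y_i^2)\,\diff\mu_0$ are mutually absolutely continuous with Radon--Nikodym densities bounded above and below, so they define the same $L^2$ space with equivalent norms. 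It follows that $V_t$ is a topological isomorphism $L^2((1+x_i^2)\,\diff\mu_t)\to L^2((1+y_i^2)\,\diff\mu_0)$ carrying $\cset[x_1,\dots,x_n]$ onto itself, whence density of polynomials is preserved for each $i$, and strong determinacy transfers. The ultradeterminate case is identical, using that $1+x_1^2+\dots+x_n^2$ pulls back under $\Lambda_t$ to $1+e^{-2a_1 t}y_1^2+\dots+e^{-2a_n t}y_n^2$, again comparable to $1+y_1^2+\dots+y_n^2$.

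Finally, having shown for each of the four properties $P$ that ``$s$ has $P$'' $\Leftrightarrow$ ``$\ft_s(t)$ has $P$'' for every $t\in\rset$, the equivalence of (i), (ii), (iii) is purely formal: (i)$\Rightarrow$(iii) is the statement applied at every $t$; (iii)$\Rightarrow$(ii) holds because $s=\ft_s(0)\in\fT_s$; and (ii)$\Rightarrow$(i) follows by applying the equivalence to $s'=\ft_s(t_0)$ together with $s=\ft_{s'}(-t_0)$. I expect the main obstacle to be the strong and ultradeterminate cases, where one must ensure that the change of variables does not destroy $L^2$-density in the presence of the weight; the clean way around this is precisely the weight-comparability estimate, which replaces an exact transformation law for the weighted measure by a two-sided bound that is all the density statement actually requires.
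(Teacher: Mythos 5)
Your proposal is correct and follows essentially the same route as the paper: both arguments rest on the substitution identity (\ref{eq:transtrans}) to transport representing measures along $\fT_s$, and both handle the weighted $L^2$-density via the two-sided comparability of $1+e^{-2a_it}y_i^2$ with $1+y_i^2$ (the paper's $\sup_{y_i}\frac{1+y_i^2e^{-2a_it}}{1+y_i^2}<\infty$ is exactly your weight-comparability bound). Your write-up is somewhat more systematic --- packaging the change of variables as a bicontinuous composition operator carrying $\cset[x_1,\dots,x_n]$ onto itself, and treating all four notions and all three implications explicitly, where the paper details only strong determinacy and $(i)\Rightarrow(iii)$ --- but there is no substantive difference in method.
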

\begin{proof}
All statements follow immediately from (\ref{eq:transtrans}). \ We will show this for strong determinacy and $(i) \Rightarrow (iii)$. \ The other statements follow with the same arguments and similar calculations as in ($*$), see below.

$(i) \Rightarrow (iii)$: Let $s$ be strongly determinate, i.e., $\cset[x_1,\dots,x_n]$ is dense in $L^2(\rset^n,(1+x_i^2)~\diff\mu_0(x))$ for all $i=1,\dots,n$ (where $\mu_0$ is the representing measure of $s$), and let $s'\in\fT_s$, i.e., there exists $t\in\rset$ such that $s' = \ft_s(t)$.

Let $A\subset\rset^n$ be a compact set. \ Since $s$ is strongly determinate there exists a sequence $(\tilde{p}_k)_{k\in\nset}\subset\cset[x_1,\dots,x_n]$ such that
\[\tilde{p}_k\xrightarrow{k\to\infty}\chi_A(x_1 e^{-a_1 t},\ldots,x_n e^{-a_n t})\]
in $L^2(\rset^n,(1+x_i^2)~\diff\mu_0(x))$ for all $i=1,\dots,n$. \ Set
\[p_k(x_1,\dots,x_n) := \tilde{p}_k(x_1 e^{a_1 t},\ldots, x_n e^{a_n t}) \in\cset[x_1,\dots,x_n].\]
Then
\begin{align*}
0\overset{\phantom{\text{(\ref{eq:transtrans})}}}{\leq} &\int |p_k(x) - \chi_A(x)|^2\cdot (1+x_i^2)~\diff\mu_t(x)\\
\overset{\phantom{\text{(\ref{eq:transtrans})}}}{=}& \int |p_k(x) - \chi_A(x)|^2\cdot (1+x_i^2)~\diff\mu_0(x_1 e^{a_1 t},\ldots,x_n e^{a_n t})\\
\overset{\text{(\ref{eq:transtrans})}}{=}& \int \left|p_k(y_1 e^{-a_1 t},\ldots,y_n e^{-a_n t}) - \chi_A(y_1 e^{-a_1 t},\ldots,y_n e^{-a_n t})\right|^2\\
&\qquad \times (1+y_i^2 e^{-2a_i t})\cdot e^{-(a_1+\dots+a_n)t}~\diff\mu_0(y_1,\dots,y_n)\tag{$*$}\\
\overset{\phantom{\text{(\ref{eq:transtrans})}}}{\leq} &\!\!\!\int \left|\tilde{p}_k(y_1,\dots,y_n) \!-\! \chi_A(y_1 e^{-a_1 t},\ldots,y_n e^{-a_n t})\right|^2 (1+y_i^2) ~\diff\mu_0(y_1,\dots,y_n)\\
&\qquad\times e^{-(a_1+\dots+a_n)t} \cdot \underbrace{\sup_{y_i\in\rset} \frac{1+y_i^2 e^{-2a_i t}}{1+y_i^2}}_{<\infty},
\end{align*}
which converges to $0$ as $k\to\infty$. \ Hence, $\cset[x_1,\dots,x_n]$ approximates the characteristic function $\chi_A$ for any compact $A\subset\rset^n$ in $L^2(\rset^n,(1+x_i^2)~\diff\mu_t(x))$ for all $i=1,\dots,n$; i.e., we have proved (3), since $s' = \ft_s(t) \ (t\in\rset)$, was arbitrary.
\end{proof}

Since the transport operator $ax\cdot\nabla$ only induces a deformation of $\rset^n$ by (\ref{eq:transtrans}), the support of any measure $\mu$ is deformed in the same way. \ Additionally, under this deformation polynomials remain polynomials and even sums of squares remain sums of squares. \ We have the following consequence.

\begin{cor}
Let $n\in\nset$, $d\in\nset\cup\{\infty\}$, and $s\in\cS_{2d}$.
\begin{enumerate}[(i)]
\item If there is a
\[p(x) = \sum_{\alpha} c_\alpha\cdot x^\alpha\in\rset[x_1,\dots,x_n]_{\leq d}\]
such that $L_s(p^2)=0$, then
\[p_t(x) := \sum_\alpha c_\alpha\cdot \exp\left(\sum_{i=1}^n a_i \alpha_i t \right)\cdot x^\alpha\in\rset[x_1,\dots,x_n]_{\leq d}\]
fulfills $L_{\ft_s(t)}(p_t^2)=0$ for all $t\in\rset$.

\item If there is a
\[p(x) = \sum_{\alpha} c_\alpha\cdot x^\alpha\in\rset[x_1,\dots,x_n]_{\leq 2d}\]
such that $p\geq 0$ and $L_s(p^2)=0$, then
\[p_t(x) := \sum_\alpha c_\alpha\cdot \exp\left(\sum_{i=1}^n a_i \alpha_i t \right)\cdot x^\alpha \geq 0\]
fulfills $L_{\ft_s(t)}(p_t)=0$ for all $t\in\rset$.
\end{enumerate}
\end{cor}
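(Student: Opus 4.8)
The plan is to reduce both parts to a single substitution identity for the Riesz functional, read off from the change of variables (\ref{eq:transtrans}). Write $T_t$ for the diagonal scaling $T_t(x) = (x_1 e^{a_1 t},\dots,x_n e^{a_n t})$, so that $T_t\circ T_{-t} = \id$ on $\rset^n$. By \Cref{dfn:ft} the deformed polynomial is exactly $p_t = p\circ T_t$, since $x^\alpha\circ T_t = \exp(\sum_i a_i\alpha_i t)\,x^\alpha$ is precisely the prefactor appearing in the definition of $p_t$. Because $T_t$ is a linear isomorphism we have $\deg p_t = \deg p$, so the degree bookkeeping matches $s\in\cS_{2d}$: in (i) $\deg p_t^2\leq 2d$, and in (ii) $\deg p_t\leq 2d$, so every functional evaluation below stays in the domain of $L_s$ and of $L_{\ft_s(t)}$.

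First I would establish the identity
\[L_{\ft_s(t)}(q) = e^{-(a_1+\dots+a_n)t}\, L_s\big(q\circ T_{-t}\big) \quad\text{for every } q\in\rset[x_1,\dots,x_n]_{\leq 2d}. \qquad (\star)\]
Two routes are available. Analytically, $(\star)$ is just (\ref{eq:transtrans}) read through a representing measure $\mu_t$ of $\ft_s(t)$ (which exists since $s\in\cS_{2d}$): $L_{\ft_s(t)}(q) = \int q~\diff\mu_t = e^{-(a_1+\dots+a_n)t}\int (q\circ T_{-t})~\diff\mu_0 = e^{-(a_1+\dots+a_n)t}L_s(q\circ T_{-t})$. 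Algebraically, and this is cleaner because it needs no measure, one expands $L_{\ft_s(t)}(q) = \sum_\beta b_\beta\,\ft_{s,\beta}(t)$ for $q=\sum_\beta b_\beta x^\beta$, factors the common $e^{-(a_1+\dots+a_n)t}$ out of $\ft_{s,\beta}(t) = s_\beta(0)\,e^{-(a_1+\dots+a_n)t}\exp(-\sum_i a_i\beta_i t)$, and recognizes $\sum_\beta b_\beta\exp(-\sum_i a_i\beta_i t)x^\beta = q\circ T_{-t}$; this is exactly (\ref{eq:transtrans}) at the level of functionals.

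With $(\star)$ in hand both claims collapse to the inversion $p_t\circ T_{-t} = p\circ(T_t\circ T_{-t}) = p$. For (i), apply $(\star)$ to $q = p_t^2$ and use $p_t^2\circ T_{-t} = (p_t\circ T_{-t})^2 = p^2$ to obtain $L_{\ft_s(t)}(p_t^2) = e^{-(a_1+\dots+a_n)t}L_s(p^2) = 0$. For (ii), nonnegativity is immediate: $T_t$ maps $\rset^n$ into $\rset^n$, so $p_t = p\circ T_t\geq 0$ pointwise because $p\geq 0$; and applying $(\star)$ to $q = p_t$ gives $L_{\ft_s(t)}(p_t) = e^{-(a_1+\dots+a_n)t}L_s(p_t\circ T_{-t}) = e^{-(a_1+\dots+a_n)t}L_s(p) = 0$. (Here the well-posed hypothesis for (ii) is $L_s(p)=0$, since for $p\geq 0$ of degree $\leq 2d$ this is the statement in the domain of $L_s$; geometrically it says the representing measure of $s$ is carried by $\cZ(p)$, and the conclusion records that the transported measure is carried by $\cZ(p_t) = T_{-t}(\cZ(p))$.)

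I do not anticipate a genuine obstacle: the whole content is the identity $(\star)$ together with $T_t\circ T_{-t} = \id$. The only points demanding care are the placement of the scalar $e^{-(a_1+\dots+a_n)t}$ (harmless, since it is nonzero) and the sign convention distinguishing the forward map $T_t$ used to build $p_t$ from the backward substitution $T_{-t}$ furnished by (\ref{eq:transtrans}); getting these consistent is precisely what makes $p_t\circ T_{-t} = p$ hold and the vanishing propagate along the transport curve.
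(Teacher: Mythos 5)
Your proof is correct and takes essentially the same route as the paper, whose entire argument is the one-line observation that $p_t(x) = p(x_1 e^{a_1 t},\ldots,x_n e^{a_n t})$ and an appeal to the change of variables (\ref{eq:transtrans}); your identity $(\star)$ is just that substitution made explicit at the level of Riesz functionals, together with the inversion $T_t\circ T_{-t}=\id$. Your side remark that the hypothesis in (ii) should read $L_s(p)=0$ rather than $L_s(p^2)=0$ (since $p^2$ may leave the domain of $L_s$ for finite $d$) is also the correct reading of the statement.
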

\begin{proof}
This follows immediately from (\ref{eq:transtrans}), since in (i) and (ii) we have $p_t(x) = p(x_1 e^{a_i t},\ldots,x_n e^{a_n t})\in\rset[x_1,\dots,x_n]$ for all $t\in\rset$.
\end{proof}

\section{Concluding Remarks} \label{sec4}%%%
%%%%%%%%%%%%%%%%%%%%%%%%%%%%%%%

Observe that the operators $\Delta$ and $ax\cdot\nabla$ do not commute, and hence the solution of the partial differential equation
\begin{equation}\label{eq:comb}\begin{split}
\partial_t u(x,t) &= \nu\Delta u(x,t) + ax\cdot\nabla u(x,t)\\
u(x,0) &= u_0(x)
\end{split}\end{equation}
is neither
\[v(x,t) = (\Theta_{\nu t}*u_0)(x_1 e^{a_1 t},\ldots,x_n e^{a_n t})\]
nor
\[w(x,t) = \Theta_{\nu t}*[u_0(x_1 e^{a_1 t},\ldots,x_n e^{a_n t})].\]
The exact solution $u$ of (\ref{eq:comb}) can be \textit{implicitly} written down, but explicit expressions are not possible in general. \ However, the \textit{moments} of the solution $u(x,t)$ can be explicitly expressed by the moments of the initial data $u_0$.

\begin{lem}
Let $n\in\nset$, $a=(a_1,\dots,a_n)\in\rset^n$, $\nu > 0$, and $u_0\in\cS(\rset^n)$. \ Then the moments of the unique solution $u(x,t)$ of (\ref{eq:comb}) are recursively determined by
\begin{align*}
s_\alpha(t) &= s_\alpha(0)\exp\left(-\sum_{j=1}^n a_j (\alpha_j+1) t\right)\\
&\quad + \nu\int_0^t [\alpha_1(\alpha_1-1)\cdot s_{\alpha-2e_1}(\tau) + \ldots + \alpha_n (\alpha_n-1)\cdot s_{\alpha-2e_n}(\tau)]\\
&\qquad\qquad\times \exp\left(\sum_{j=1}^n a_j (\alpha_j+1) \tau \right)~\diff\tau \cdot \exp\left(-\sum_{j=1}^n a_j (\alpha_j+1) t \right)
\end{align*}
\end{lem}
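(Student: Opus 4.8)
The plan is to reduce the moment evolution to a scalar first-order linear ODE for each $s_\alpha(t)$ and then solve it explicitly by an integrating factor. First I would differentiate under the integral sign and insert the equation (\ref{eq:comb}), splitting the right-hand side into the diffusion part and the transport part:
\[
\partial_t s_\alpha(t) = \int_{\rset^n} x^\alpha\,\partial_t u(x,t)~\diff x = \nu\int_{\rset^n} x^\alpha\,\Delta u~\diff x + \int_{\rset^n} x^\alpha\,(ax\cdot\nabla u)~\diff x.
\]
The first integral is handled exactly as in \Cref{lem:moments}: since $u(\,\cdot\,,t)$ decays rapidly, integrating by parts twice transfers the Laplacian onto the monomial and yields $\nu\sum_{j=1}^n \alpha_j(\alpha_j-1)\,s_{\alpha-2e_j}(t)$. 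For the transport part I would write $ax\cdot\nabla u = \sum_{i=1}^n a_i x_i\,\partial_i u$ and integrate by parts once in the variable $x_i$, using $\partial_i(x_i x^\alpha) = (\alpha_i+1)x^\alpha$ and the vanishing of all boundary terms, to obtain
\[
\int_{\rset^n} x^\alpha x_i\,\partial_i u~\diff x = -\int_{\rset^n} \partial_i(x_i x^\alpha)\,u~\diff x = -(\alpha_i+1)\,s_\alpha(t).
\]

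Collecting both contributions and abbreviating $A_\alpha := \sum_{j=1}^n a_j(\alpha_j+1)$ and $F_\alpha(t) := \nu\sum_{j=1}^n \alpha_j(\alpha_j-1)\,s_{\alpha-2e_j}(t)$, the moment $s_\alpha(t)$ then satisfies the scalar linear ODE
\[
\partial_t s_\alpha(t) = -A_\alpha\, s_\alpha(t) + F_\alpha(t),
\]
whose inhomogeneity $F_\alpha$ involves only strictly lower moments $s_{\alpha-2e_j}$. Multiplying by the integrating factor $e^{A_\alpha t}$, integrating from $0$ to $t$, and using the initial value $s_\alpha(0)$ gives
\[
s_\alpha(t) = e^{-A_\alpha t}\left(s_\alpha(0) + \int_0^t e^{A_\alpha \tau}\,F_\alpha(\tau)~\diff\tau\right),
\]
which is precisely the asserted identity once $A_\alpha$ and $F_\alpha$ are written out. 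The recursion is well founded because $F_\alpha\equiv 0$ whenever every $\alpha_j\le 1$, so the system can be solved order by order in $|\alpha|$, each $s_\alpha$ being expressed through the already-computed lower moments.

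The main obstacle is not algebraic but analytic: it is the justification of the interchange of $\partial_t$ with the integral and, above all, of the two integration-by-parts steps. Both require that the unique solution $u(\,\cdot\,,t)$ of the combined equation (\ref{eq:comb}) be regular and rapidly decaying enough for all moments to be finite and all boundary terms to vanish, ideally $u(\,\cdot\,,t)\in\cS(\rset^n)$ for every $t\ge 0$. For the pure heat and pure transport equations this is immediate from the explicit solution formulas (\ref{eq:heatSol}) and (\ref{eq:transSol}), but for their combination one must argue that the joint flow preserves the Schwartz class; granting this — as the hypothesis $u_0\in\cS(\rset^n)$ and the standing assumption of a unique solution are meant to ensure — the computation above is routine and the formula follows.
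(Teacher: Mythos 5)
Your proof takes essentially the same route as the paper's: differentiate under the integral, integrate by parts to transfer $\nu\Delta$ and $ax\cdot\nabla$ onto the monomial, obtain the first-order linear inhomogeneous ODE $\partial_t s_\alpha = -A_\alpha s_\alpha + F_\alpha$ with $F_\alpha$ depending only on lower moments, and solve it by an integrating factor. If anything yours is the more careful write-up: you derive the correct sign $-(\alpha_i+1)s_\alpha$ for the transport contribution (the paper's displayed computation shows a $+$ there, inconsistent with the formula being proved), you spell out the integrating-factor step the paper leaves implicit, and you flag the needed decay of $u(\cdot,t)$ justifying the integrations by parts.
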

\begin{proof}
We have
\begin{align*}
\partial_t s_\alpha(t) &= \int_{\rset^n} x^\alpha\cdot [\nu\Delta u(x,t) + ax\cdot\nabla u(x,t)]~\diff x\\
&= \int_{\rset^n} \nu\cdot [\beta_1 (\beta_1-1) x^{\beta-2e_1} + \dots + \beta_n (\beta_n-1) x^{\beta-2e_n}]\cdot u(x,t)\\
&\qquad\quad + [a_1 (\alpha_1 + 1) + \ldots + a_n (\alpha_n + 1)]\cdot x^\alpha\cdot u(x,t)~\diff x\\
&= \nu\cdot\alpha_1(\alpha_1-1)\cdot s_{\alpha-2e_1}(t) + \dots + \nu\cdot\alpha_n (\alpha_n-1)\cdot s_{\alpha_n-2e_n}(t)\\
&\qquad\quad + [a_1 (\alpha_1 + 1) + \ldots + a_n (\alpha_n + 1)]\cdot s_\alpha(t),
\end{align*}
which is an inhomogeneous ordinary differential equation. \ This proves the statement.
\end{proof}

Observe that the moments $s_\alpha(t)$ only depend on the initial moments $s_\beta(0)$ with $\beta\leq\alpha$, because of the simple structure of the differential operator $\nu\Delta + ax\cdot\nabla$.

\begin{exm}\label{exm:aPlus}
For $n=\nu=a=1$ we have
\[s_\alpha(t) = s_\alpha(0)\cdot e^{-(\alpha+1) t} + \int_0^t \alpha(\alpha-1)\cdot s_{\alpha-2}(\tau)\cdot e^{(\alpha+1) \tau}~\diff\tau\cdot e^{-(\alpha+1) t},\]
i.e.,
\begin{align*}
s_0(t) &= s_0(0) e^{-t}\\
s_1(t) &= s_1(0) e^{-2t}\\
s_2(t) &= (s_2(0)-s_0(0)) e^{-3t} + s_0(0) e^{-t}\\
s_3(t) &= (s_3(0)-3s_1(0)) e^{-4t} + 3s_1(0) e^{-2t}\\
&\ \, \vdots \tag*{$\circ$}
\end{align*}
\end{exm}

\begin{exm}\label{exm:aMinus}
For $n=\nu=1$ and $a=-1$ we have
\[s_\alpha(t) = s_\alpha(0)\cdot e^{(\alpha+1) t} + \int_0^t \alpha(\alpha-1)\cdot s_{\alpha-2}(\tau)\cdot e^{-(\alpha+1) \tau}~\diff\tau\cdot e^{(\alpha+1) t},\]
i.e.
\begin{align*}
s_0(t) &= s_0(0) e^{t}\\
s_1(t) &= s_1(0) e^{2t}\\
s_2(t) &= (s_2(0)+s_0(0)) e^{3t} - s_0(0) e^{t}\\
s_3(t) &= (s_3(0)+3s_1(0)) e^{4t} - 3s_1(0) e^{2t}\\
&\ \, \vdots \tag*{$\circ$}
\end{align*}
\end{exm}

In (\ref{eq:heatOneDimMoments}) we have written down the moments for the one-dimensional heat equation ($n= \nu = 1$ and $a=0$), and in (\ref{eq:trans2Mom}) the moments for the transport equation ($\nu = 0$). \ By comparing (\ref{eq:heatOneDimMoments}) and (\ref{eq:trans2Mom}) with Examples \ref{exm:aPlus} and \ref{exm:aMinus}, especially by looking at the case $t\to\infty$, we find that the transport operator $ax\cdot\nabla$ governs the long term behavior of the moments.

The matter of determinacy (in its various degrees) for the combination (\ref{eq:comb}) is nontrivial. \ Although we know the situation well for both the heat and the transport equations (Theorem \ref{thm:indeterminate}, Corollary \ref{heatdeterminacy} and Corollary \ref{cor227}, and Corollary \ref{transportdeterminacy}, respectively), the equations for the combined moments indicate that there are very subtle considerations to be taken into account when analyzing determinacy for (\ref{eq:comb}). \ We plan to pursue this matter in future research.

\bigskip

\providecommand{\bysame}{\leavevmode\hbox to3em{\hrulefill}\thinspace}
\providecommand{\MR}{\relax\ifhmode\unskip\space\fi MR }
% \MRhref is called by the amsart/book/proc definition of \MR.
\providecommand{\MRhref}[2]{%
  \href{http://www.ams.org/mathscinet-getitem?mr=#1}{#2}
}
\providecommand{\href}[2]{#2}

%\bibliographystyle{amsplain}
%\bibliographystyle{amsalpha}
%\bibliography{../../bibdata}

\end{document}